\newcommand{\noi}{\noindent}
\newtheorem{thm}{Theorem}[section]
\newtheorem{cor}[thm]{Corollary}
\newtheorem{proposition}[thm]{Proposition}
\newtheorem{prop}[thm]{Proposition}
\newtheorem{lemma}[thm]{Lemma}
\theoremstyle{definition}
\newtheorem{remark}[thm]{Remark}
\begin{document}

\title{Families of spectral sets for Bernoulli convolutions}

\author{Palle E. T. Jorgensen, Keri A. Kornelson, and Karen L. Shuman}

\address{\textrm{(P. E. T. Jorgensen)}
Department of Mathematics,
University of Iowa,
Iowa City, IA 52242 USA}
\email{jorgen@math.uiowa.edu}

\address{\textrm{(K. A. Kornelson)}
Department of Mathematics,
University of Oklahoma,
Norman, OK 73019 USA}
\email{kkornelson@math.ou.edu}

\address{\textrm{(K. L. Shuman)}
Department of Mathematics \& Statistics,
Grinnell College,
Grinnell, Iowa 50112 USA}
\email{shumank@math.grinnell.edu}

\thanks{The research of the first author was supported in part by grants from the National Science Foundation.  The research of the second and third authors was supported in part by NSF grant DMS0701164. }

\date{\today}
\subjclass[2000]{Primary 28A80, 42A16; Secondary 42C25, 46E30, 42B05, 28D05}

\keywords{Bernoulli convolution, spectral measure, Hilbert space, fractals, Fourier series, Fourier
coefficients, orthogonal series, iterated function system}
\begin{abstract}
We study the harmonic analysis of Bernoulli measures $\mu_\lambda$, a one-parameter family of
compactly supported Borel probability measures on the real line. The parameter $\lambda$ is a fixed
number in the open interval $(0, 1)$. The measures $\mu_\lambda$ may be understood in any one of
the following three equivalent ways: as infinite convolution measures of a two-point probability
distribution; as the distribution of a random power series; or as an iterated function system (IFS) equilibrium measure
determined by the two transformations $\lambda(x \pm 1)$.  For a given $\lambda$, we consider
the harmonic analysis in the sense of Fourier series in the Hilbert space $L^2(\mu_\lambda)$.  For
$L^2(\mu_\lambda)$ to have infinite families of orthogonal complex exponential functions $e^{2\pi i s (\cdot)}$,
 it is known that $\lambda$ must be a rational number of the form $\frac{m}{2n}$, where $m$ is odd. We show that $L^2(\mu_{\frac{1}{2n}})$ has a variety of Fourier bases; i.e. orthonormal bases of exponential functions.  For some other rational values of $\lambda$, we exhibit maximal Fourier families that are not orthonormal bases. 
\end{abstract}
\maketitle

\section{Introduction}\label{Sec:Introduction}

Fractal scaling and self-similarity occur both in nature and in 
man-made systems such as large communication networks.  The applications of this theory range from the study of biological systems to the development of models for telecommunications.  
To better understand this fractal
scaling we examine a class of fractals constructed by a
 finite family of affine transformations.   The iteration of these transformations produce compact fractals embedded in the real line and fractal measures $\mu$ supported on these sets.  Another type of  iteration
produces spectra, which correspond to orthogonal families in
$L^2(\mu)$.
    
    The Bernoulli convolutions are a class of these fractal measures.  They consist of a one-parameter family of 
compactly supported Borel probability measures $\mu_\lambda$, where the parameter $\lambda$ is taken from the interval $(0, 1)$.  Of all the affine fractal measures, Bernoulli convolutions
 have been explored the most.
Despite the rich literature on Bernoulli convolutions, the
corresponding Fourier analysis of even this restricted family is still in its
infancy. 

Historically three types of questions have often been considered: For what values of $\lambda$  is $\mu_\lambda$
absolutely continuous?  For what values of $\lambda$ does $L^2(\mu_{\lambda})$ admit a Fourier-based harmonic
analysis? When it does, what are the Fourier bases in the Hilbert space $L^2(\mu_{\lambda})$?  In this paper we
are concerned with the last two questions.  We examine infinite and recursively generated orthogonal
systems of Fourier exponentials in the Hilbert spaces $L^2(\mu)$ when  $\mu$ is one
of the Bernoulli measures and examine when such systems are orthonormal bases.

\subsection{Background and definitions}\label{Subsec:Background}
Given $S$ a subset of $\mathbb{R}$, we will use the notation $E(S)$ to be the set of exponential functions 
\begin{equation}\label{Eqn:E(S)}
E(S) := \{ e_s(x) := \exp (2 \pi i s x) \:|\: s\in S\}.
\end{equation} 
We use terminology here motivated by classical Fourier duality. 
We say that $S$ is \textit{orthogonal} if the exponential functions in $E(S)$ are orthogonal in $L^2(\mu)$. We say that $S$ yields a \textit{Fourier frame} if $E(S)$ satisfies a frame estimate.  When $E(S)$ forms an orthonormal basis (ONB) in $L^2(\mu)$, we say that the pair $(\mu, S)$ is a \textit{spectral pair}, and we say that $S$ is a \textit{spectral set} in $L^2(\mu)$.

The measures we discuss in this paper come from the one-parameter family of Bernoulli measures $\{\mu_{\lambda} : \lambda \in (0,1)\}$.  In the literature, there are three different ways in which to produce a Bernoulli measure, each of which we mention briefly here.  Let $\lambda \in (0,1)$.  
\renewcommand{\labelenumi}{(\roman{enumi})}
\begin{enumerate}
\item  \cite{Erd39, Erd40}  Let $\Omega = \prod_{1}^{\infty} (\pm 1)$ be the product space with the Bernoulli coin-tossing measure $\mathbb{P}$, and define the random variable \[\mathbb{X}_{\lambda}(\omega) = \sum_{k=1}^{\infty} \omega_k \lambda^k, \; \textrm{for}\;  \omega = (\omega_k) \in \Omega.\]
 Then for a Borel subset $E \subset \mathbb{R}$, we define \[ \mu_{\lambda}(E) := \mathbb{P}(\mathbb{X}_{\lambda}^{-1}(E)); \] i.e. $\mu_{\lambda}$ is the distribution of the random variable $\mathbb{X}_{\lambda}$. 
 \item  \cite{Hut81}  Define the two real-valued maps  \[ \tau_{+}(x) = \lambda (x + 1) \; \textrm{and}\; \tau_-(x) = \lambda(x-1).\]  These form an affine iterated function system (IFS).  The measure $\mu_{\lambda}$ is the unique equilibrium measure satisfying the invariance property 
 \begin{equation}\label{eqn:mu} \mu_{\lambda} = \frac12 ( \mu_{\lambda} \circ \tau_+^{-1} + \mu_{\lambda} \circ \tau_-^{-1}).\end{equation}
 \item  \cite{Erd39, Erd40}  The measure $\mu_{\lambda}$ has Fourier transform given by an infinite product formula which we present in Equation (\ref{eqn:muhat}).\end{enumerate}
The support of the Bernoulli measure $\mu_{\lambda}$ is the unique compact set $K_{\lambda} \subset \mathbb{R}$ which satisfies $K_{\lambda} = \tau_+(K_{\lambda}) \cup \tau_-(K_{\lambda})$.  In the random variable construction,  $K_{\lambda} = \mathbb{X}_{\lambda}(\Omega)$.  For every $\lambda \in (0,1)$, the set $K_{\lambda}$ is contained in the interval $\Bigl[ -\frac{\lambda}{1-\lambda}, \frac{\lambda}{1-\lambda}\Bigl]$.

In the cases where $\lambda$ is the reciprocal of an integer, the corresponding measure $\mu_{\lambda}$ is one of the Cantor measures, and its support is a Cantor set.   For example, if $\lambda = \frac{1}{3}$, then we have the familiar middle-third Cantor construction in which an initial interval is subdivided in $3$ equal parts, and the middle third is omitted. The procedure is continued in a recursive algorithm involving a step-by-step rescaling of the measure. The limiting measure is our $\mu_{\frac{1}{3}}$.  

When $\lambda = \frac{1}{2}$, the measure $\mu_{\frac{1}{2}}$ is Lebesgue measure on the interval $[-1,1]$, scaled so that the total measure is $1$.   In this case, we can take the set $S$ to be  $\frac{1}{2}\mathbb{Z}$ to produce the orthonormal basis  $E(S)$ for $L^2(\mu_{\frac12})$, so we have a spectral pair $(\mu_{\frac12}, \frac12\mathbb{Z})$.  At the other extreme, when $\lambda = \frac{1}{3}$,  it turns out (see \cite{JoPe98}) that $L^2(\mu_{\frac{1}{3}})$ does not even contain an orthogonal set $E(S)$  with cardinality more than $2$.  

     It was proved in \cite{JoPe98} that even though $L^2(\mu_{\frac{1}{3}})$ does not have a Fourier basis,  $L^2(\mu_{\frac{1}{4}}$) does.     In fact, the general result in \cite{JoPe98} is that $L^2(\mu_{\frac{1}{2n}})$ has a Fourier basis $E(S)$ for all $n\in\mathbb{N}$.      It has been shown in \cite{JKS08, HuLa08} that there can be infinite orthogonal collections of exponentials when $\lambda > \frac12$, but a recent result in \cite{DHJ09} has shown that  Fourier bases are impossible for these cases.  In this paper, we take values of $\lambda$ which are known to have orthonormal bases and explore the families of Fourier bases which can be formed for $L^2(\mu_{\lambda})$.

\subsection{Main results}
We will use a spectral function to determine the Fourier basis properties for collections of exponential functions.  Theorem \ref{Thm:cGammaAnalytic} contains the most general setting (frames) for which the spectral function has an entire analytic extension.  The lemmas leading up to this result contain detailed results of independent interest.

In Section \ref{sec:spectra}, we demonstrate that the spectral function is a Perron-Frobenius eigenvector for an associated transfer operator $T$.   The main results entail sufficient conditions under which the transfer operator is contractive and hence corresponds to a spectral pair for a Bernoulli measure $\mu_{\frac{1}{2n}}$.  First, we consider the special case $\lambda = \frac18$.    It was shown in \cite{JoPe98} that given the set  
\begin{equation}\label{eqn:Gamma} \Gamma\Bigl(\frac{1}{8}\Bigr)
=\Biggl\{ 
\sum_{k=0}^{\textrm{finite}} a_k 8^k : a_k \in \{0, 2\} 
\Biggr\} 
=  \{0, 2, 16, 18, \ldots ,\},
\end{equation} 
the collection $E(\Gamma(\frac18))$ forms an ONB for $L^2(\mu_{\frac{1}{8}})$.

\noindent\textsc{Theorem A.  } The set  $E(3\Gamma(\frac18))$ forms an ONB for $L^2(\mu_{\frac{1}{8}})$. 

We then generalize the result to the case of $\lambda = \frac{1}{2n}$ and $p$ an odd positive integer.  The set $E(\Gamma(\frac{1}{2n}))$ where  

\begin{equation}\label{Eqn:Gamma12n} \Gamma\Bigl(\frac{1}{2n}\Bigr)
=\Biggl\{ 
\sum_{k=0}^{\textrm{finite}} a_k (2n)^k : a_k \in \Bigr\{0, \frac{n}{2}\Bigr\} 
\Biggr\} \end{equation} is known by \cite{JoPe98} to be an ONB for $L^2(\mu_{\frac{1}{2n}})$.

\noindent\textsc{Theorem B.  }  If $p \in 2\mathbb{N}+1$ such that  $p<\frac{2(2n-1)}{\pi}$, then $\left(\mu_{\frac{1}{2n}}, p\Gamma\Big(\frac{1}{2n}\Big)\right)$ is a spectral pair for $L^2(\mu_{\frac{1}{2n}})$.
  
In Section \ref{Sec:MaximalNotONB}, we examine cases when $L^2(\mu_{\lambda})$ may not have an ONB of exponential functions.  Theorem \ref{Thm:38Maximality} demonstrates cases where there do exist maximally orthogonal families.

\subsection{Overview of prior literature}\label{subsec:refs}  

The results in this paper relate to general Fourier duality results in analysis.  This general framework includes subjects such as universal tiling sets and the dual spectral set conjecture (see \cite{DHS09, Jor06, JKS07a, JoPe99, PeWa01}).  Starting with \cite{Fug74}, questions about Fourier duality have received considerable attention with respect to pure harmonic analysis \cite{DuJo07b, DHPS08, DuJo09, JoPe98, Ped04a, Ped04b, LaWa02, LaWa06} and with respect to applications such as wavelets, sampling, algorithms, martingales, and substitution-dynamical systems \cite{DuJo05, DuJo06, DuJo07a, JoSo07}.

     The special subclass of affine IFSs and their harmonic analysis was initiated by one of the authors with Pedersen \cite{JoPe98}.  These special affine IFSs have been extensively researched since---see, for example, \cite{PeSo96, PSS00, Sid03, Den09, HuLa08, DuJo06, DuJo07a, DuJo07b, DuJo07c,  DuJo09, Jor06, JKS07a, JKS08,  Str98, MaSi98, HuLa02}.  While these papers place the focus on the possibilities for orthogonal sets of Fourier frequencies, there are more recent analysis of overcomplete bases, more particularly, systems of Fourier frames in $L^2(\mu)$ in the case $\mu$ is one of the IFS measures. In this case, the results so far are only at an initial stage. For example, if $\mu$ is the middle-third Cantor measure, the question of whether of not $L^2(\mu)$ admits Fourier frames has implications for the Kadison-Singer conjecture. For some of these results regarding frames, the reader is referred to \cite{OrSe02, FJKO05, HKLW07, Jor08, JoOk08}.

\section{The function $c_{\Gamma}$ and some elementary properties}\label{Sec:BasesAndFrames}

Given a Bernoulli measure $\mu_\lambda$, the Hilbert space $L^2(\mu_\lambda)$ will only have an orthogonal Fourier basis $E(\Gamma)$ for
certain values of the parameter $\lambda$.   As previously mentioned, when $\lambda= \frac13$  there
there cannot be more than two orthogonal exponential functions,  and if $\lambda > \frac12$  then the $\lambda$-IFS has
overlap and the measure cannot be spectral \cite{DHJ09}.  If  $\lambda = \frac{1}{2n}$ however, then $L^2(\mu_{\frac{1}{2n}})$ does have orthogonal Fourier bases.   Specifically, given the set $\Gamma(\frac{1}{2n})$ defined by \begin{equation}\label{eqn:ngamma} \Gamma\Bigl(\frac{1}{2n}\Bigr)
:=\Biggl\{ 
\sum_{k=0}^{\textrm{finite}} a_k (2n)^k : a_k \in \Bigl\{0, \frac{n}{2}\Bigr\} 
\Biggr\}, \end{equation}  
it is proved in \cite{JoPe98} that $E(\Gamma)$ is an ONB for  $L^2(\mu_{\frac{1}{2n}})$.

 We will show in Section \ref{sec:spectra} that for a fixed $n$, there can be a variety of such bases.  We establish a condition such that for any odd integer $p$ satisfying the condition, the $p$-dilated set $p\Gamma$ is a spectrum for  $L^2(\mu_{\frac{1}{2n}})$.  
 
 In this section, we describe a real-valued function $c$ determined by the Bernoulli measure and the set $\Gamma$.  This function is a tool for determining the properties of the exponential set $E(\Gamma)$ in the Hilbert space $L^2(\mu_{\lambda})$.  We go on to show that this function has an entire analytic extension to the complex plane and also that it has a bounded derivative.  These are crucial properties used in the proof of our main result in Section \ref{sec:spectra}.

Let $\mu_{\lambda}$ be a Bernoulli convolution measure on $\mathbb{R}$.  We can define the Fourier transform of the measure, i.e. the function $\widehat{\mu}_{\lambda}$, by \begin{equation*} \widehat{\mu}_{\lambda}(t) = \int_{\mathbb{R}} e^{2\pi i x t} \mathrm{d}\mu_{\lambda}(x). \end{equation*}  It is readily verified that the invariance property from Equation (\ref{eqn:mu})  gives \begin{equation}\label{eqn:lambdamuhat} \widehat{\mu}_{\lambda}(t) = \cos(2\pi \lambda t) \widehat{\mu}_{\lambda}(\lambda t).\end{equation}  We can iterate this to achieve the convergent infinite product representation of this function:
 \begin{equation}\label{eqn:muhat}  \widehat{\mu}_{\lambda}(t) =  \prod_{k=1}^{\infty} \cos(2\pi \lambda^kt). \end{equation} 

The zeros of the function $\widehat{\mu}_{\lambda}$ will be useful in determining orthogonality.  We denote by $\mathcal{Z}_{\lambda}$ the set of all $t \in \mathbb{R}$ such that one of the factors in $\widehat{\mu}_{\lambda}$ is zero, which gives \begin{equation}\label{eqn:zero} \mathcal{Z}_{\lambda} = \Biggr\{ \frac{2m+1}{4 \lambda^k} \,:\, m \in \mathbb{Z}, k \geq 1\Biggr\}. \end{equation}  Observe that $\langle e_{\gamma_1},e_{\gamma_2} \rangle = \widehat{\mu}_{\lambda}(\gamma_1-\gamma_2)$.  Therefore, we have the following lemma.

\begin{lemma}\label{lem:zero} Exponential functions $e_{\gamma_1}$ and $e_{\gamma_2}$ are orthogonal in $L^2(\mu_{\lambda})$ if and only if $\gamma_1- \gamma_2 \in \mathcal{Z}_{\lambda}$.
\end{lemma}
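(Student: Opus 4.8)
The plan is to reduce the orthogonality condition to a statement about the zero set of the Fourier transform $\widehat{\mu}_\lambda$ and then to locate those zeros explicitly using the infinite product formula (\ref{eqn:muhat}). The starting point is the identity already recorded just before the statement, namely $\langle e_{\gamma_1}, e_{\gamma_2}\rangle = \widehat{\mu}_\lambda(\gamma_1 - \gamma_2)$, which follows by writing out the $L^2(\mu_\lambda)$ inner product and recognizing $\int_{\mathbb{R}} e^{2\pi i (\gamma_1 - \gamma_2) x}\, d\mu_\lambda(x)$ as $\widehat{\mu}_\lambda(\gamma_1 - \gamma_2)$. Thus $e_{\gamma_1}$ and $e_{\gamma_2}$ are orthogonal in $L^2(\mu_\lambda)$ if and only if $\widehat{\mu}_\lambda(\gamma_1 - \gamma_2) = 0$, and the lemma becomes the claim that $\widehat{\mu}_\lambda(t) = 0$ exactly when $t \in \mathcal{Z}_\lambda$.

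Next I would invoke the product representation $\widehat{\mu}_\lambda(t) = \prod_{k=1}^\infty \cos(2\pi\lambda^k t)$ from (\ref{eqn:muhat}). One direction is immediate: if $t \in \mathcal{Z}_\lambda$, then by the definition in (\ref{eqn:zero}) some factor $\cos(2\pi\lambda^k t)$ vanishes, hence the whole product is $0$. The real content lies in the converse, for which I must show that the product can vanish only through a vanishing factor.

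The main obstacle is precisely this last point: ruling out that $\widehat{\mu}_\lambda(t)$ vanishes as a limit without any single factor being zero. Here I would exploit $\lambda \in (0,1)$. Since $\lambda^k t \to 0$, there is an index $K$ (depending on $t$) beyond which $|2\pi\lambda^k t| < \frac{\pi}{2}$, so that $\cos(2\pi\lambda^k t) > 0$ for every $k \ge K$. Moreover $1 - \cos(2\pi\lambda^k t) = O(\lambda^{2k} t^2)$ is summable, so the tail product $\prod_{k \ge K}\cos(2\pi\lambda^k t)$ converges to a strictly positive limit. Consequently $\widehat{\mu}_\lambda(t)$ equals this positive number times the finite product $\prod_{k=1}^{K-1}\cos(2\pi\lambda^k t)$, and it is therefore zero if and only if one of those finitely many factors is zero.

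Finally, solving $\cos(2\pi\lambda^k t) = 0$ yields $2\pi\lambda^k t = \frac{(2m+1)\pi}{2}$ for some $m \in \mathbb{Z}$, equivalently $t = \frac{2m+1}{4\lambda^k}$. Collecting these over all admissible $k \ge 1$ recovers exactly the set $\mathcal{Z}_\lambda$ of (\ref{eqn:zero}), establishing the characterization $\widehat{\mu}_\lambda(t) = 0$ if and only if $t \in \mathcal{Z}_\lambda$ and hence, via the inner product identity, the lemma.
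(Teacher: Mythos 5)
Your proof is correct and takes essentially the same route as the paper, which simply records the identity $\langle e_{\gamma_1}, e_{\gamma_2}\rangle = \widehat{\mu}_{\lambda}(\gamma_1-\gamma_2)$ and treats the identification of the zero set of $\widehat{\mu}_{\lambda}$ with $\mathcal{Z}_{\lambda}$ as immediate from its definition. Your tail-product argument (positivity of $\prod_{k\ge K}\cos(2\pi\lambda^k t)$ via summability of $1-\cos(2\pi\lambda^k t)$) supplies exactly the detail the paper leaves implicit---that the infinite product cannot vanish without one of its factors vanishing---so your write-up is, if anything, more complete than the original.
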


Given a countable set $\Gamma$, we will be determining properties of the family of exponentials $E(\Gamma)$ via the \textit{spectral function} $c_{\Gamma}:\mathbb{R}\rightarrow\mathbb{R}$ defined by
\begin{equation}\label{eqn:cdefn}
c_{\Gamma}(t): = \sum_{\gamma \in \Gamma} |\langle e_{\gamma},e_{-t} \rangle|^2 =  \sum_{\gamma\in\Gamma} |\widehat{\mu}(\gamma + t)|^2.
\end{equation} The following are well-known properties of the function $c_{\Gamma}$.
\begin{lemma}\label{lem:cproperties}
\begin{enumerate}[(1)]
\item $E(\Gamma)$ is orthogonal if and only if $c_{\Gamma}({t}) \leq 1$ for all ${t}\in\mathbb{R}$.
\item $E(\Gamma)$ is an ONB if and only if $c_{\Gamma}$ is identically $1$.
\item If the set $E(\Gamma)$ has an upper frame bound $B$, then $c_{\Gamma}({t}) \leq B$ for all  ${t}\in\mathbb{R}$.
\item If the set $E(\Gamma)$ has a lower frame bound $A$, then $c_{\Gamma}({t}) \geq A$ for all  ${t}\in\mathbb{R}$.
\end{enumerate}
\end{lemma}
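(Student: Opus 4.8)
The plan is to read $c_{\Gamma}(t)$ as the total analysis-operator energy of the family $E(\Gamma)$ applied to a single unit test vector. Since $\mu_{\lambda}$ is a probability measure, every exponential $e_s$ is a unit vector in $L^2(\mu_{\lambda})$, so $\widehat{\mu}(0) = \langle e_s, e_s \rangle = 1$, and using the identity $\langle e_{\gamma_1}, e_{\gamma_2} \rangle = \widehat{\mu}(\gamma_1 - \gamma_2)$ recorded above I would rewrite
\begin{equation*}
c_{\Gamma}(t) = \sum_{\gamma \in \Gamma} |\langle e_{-t}, e_{\gamma} \rangle|^2 .
\end{equation*}
Thus $c_{\Gamma}(t)$ is exactly the left-hand side of the Bessel, frame, and Parseval relations evaluated at the \emph{unit} vector $f = e_{-t}$. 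Each clause will then follow by specializing a standard Hilbert-space fact to these particular test vectors, together with the observation that $\{ e_{-t} : t \in \mathbb{R} \}$ is a total set in $L^2(\mu_{\lambda})$.

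For the ``easy'' implications I would argue directly. If $E(\Gamma)$ is orthogonal, it is an orthonormal system (the vectors already have norm one), so Bessel's inequality applied to $f = e_{-t}$ gives $c_{\Gamma}(t) \le \|e_{-t}\|^2 = 1$, which is the forward direction of (1). The same substitution turns the upper frame estimate $\sum_{\gamma} |\langle f, e_{\gamma} \rangle|^2 \le B \|f\|^2$ into $c_{\Gamma}(t) \le B$ and the lower estimate $A\|f\|^2 \le \sum_{\gamma} |\langle f, e_{\gamma} \rangle|^2$ into $c_{\Gamma}(t) \ge A$, giving (3) and (4). For the forward direction of (2), an ONB satisfies Parseval's identity for every $f$, and evaluating at $f = e_{-t}$ yields $c_{\Gamma}(t) = 1$.

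The two converse directions require slightly more. For the converse of (1), I would suppose $c_{\Gamma}(t) \le 1$ for all $t$, fix $\gamma_0 \in \Gamma$, and evaluate at $t = -\gamma_0$ to isolate the diagonal term: $c_{\Gamma}(-\gamma_0) = |\widehat{\mu}(0)|^2 + \sum_{\gamma \ne \gamma_0} |\widehat{\mu}(\gamma - \gamma_0)|^2 = 1 + \sum_{\gamma \ne \gamma_0} |\widehat{\mu}(\gamma - \gamma_0)|^2$. The hypothesis forces every remaining summand to vanish, so $\widehat{\mu}(\gamma - \gamma_0) = 0$ for all $\gamma \ne \gamma_0$, which by Lemma \ref{lem:zero} is precisely the orthogonality of $E(\Gamma)$. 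For the converse of (2), I would assume $c_{\Gamma} \equiv 1$; then $c_{\Gamma} \le 1$ gives orthonormality by (1), and writing $P$ for the orthogonal projection onto $V := \overline{\operatorname{span}}\, E(\Gamma)$, the orthonormality yields $\sum_{\gamma} |\langle f, e_{\gamma} \rangle|^2 = \|Pf\|^2$, so $c_{\Gamma}(t) = 1 = \|e_{-t}\|^2$ says exactly that $e_{-t} \in V$ for every $t$. Since $V$ is closed and contains the total family $\{ e_{-t} \}$, I conclude $V = L^2(\mu_{\lambda})$, i.e. $E(\Gamma)$ is an ONB.

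The one step I expect to be the genuine obstacle, as opposed to a transcription of a textbook inequality, is the totality of $\{ e_{-t} : t \in \mathbb{R} \}$ used in the final conclusion. Here I would invoke that $\mu_{\lambda}$ is compactly supported on $K_{\lambda} \subset [-\tfrac{\lambda}{1-\lambda}, \tfrac{\lambda}{1-\lambda}]$: the finite linear combinations of the $e_t$ form a conjugation-closed subalgebra of $C(K_{\lambda})$ that contains the constants and separates points, so by the Stone--Weierstrass theorem they are dense in $C(K_{\lambda})$ and hence in $L^2(\mu_{\lambda})$. This density is the only non-formal ingredient; everything else is a direct application of Bessel's inequality, the frame inequalities, and Parseval's identity to the unit vectors $e_{-t}$, supplemented by the diagonal-isolation trick and Lemma \ref{lem:zero} for the converse of (1).
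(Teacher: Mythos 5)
Your proposal is correct, but there is nothing in the paper to compare it against: the authors state this lemma without proof, introducing it only with the phrase ``the following are well-known properties of the function $c_{\Gamma}$,'' so your write-up supplies the standard argument that the paper leaves implicit. All of your steps check out. Clauses (3) and (4) and the forward directions of (1) and (2) are, as you say, the frame inequalities, Bessel's inequality, and Parseval's identity applied to the unit vectors $f = e_{-t}$ (unit because $\mu_{\lambda}$ is a probability measure); the converse of (1) is the diagonal-isolation evaluation $c_{\Gamma}(-\gamma_0) = 1 + \sum_{\gamma \neq \gamma_0}|\widehat{\mu}(\gamma - \gamma_0)|^2 \leq 1$, which forces all off-diagonal terms to vanish; and the converse of (2) correctly reduces completeness to the totality of $\{e_{-t} : t \in \mathbb{R}\}$ in $L^2(\mu_{\lambda})$, obtained from Stone--Weierstrass on the compact support $K_{\lambda}$ (the exponentials form a conjugation-closed unital subalgebra separating points) together with density of $C(K_{\lambda})$ in $L^2(\mu_{\lambda})$. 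The two Hilbert-space facts you lean on in that last step---the identity $\sum_{\gamma}|\langle f, e_{\gamma}\rangle|^2 = \|Pf\|^2$ for the orthogonal projection $P$ onto $\overline{\operatorname{span}}\,E(\Gamma)$, and the implication that $\|Pe_{-t}\| = \|e_{-t}\|$ places $e_{-t}$ in that span---are both used correctly, so the proof is complete.
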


Denote by $\delta_{\gamma}$ the element of $\ell^2(\Gamma)$ which has $1$ in the $\gamma$ coordinate and $0$ in all other coordinates.  Then $\{\delta_{\gamma} : \gamma\in\Gamma\}$ is the standard ONB for $\ell^2(\Gamma)$. 

Let $S$ be the operator from $\ell^2(\Gamma)$ to $L^2(\mu)$
defined by
\begin{equation}\label{Eqn:Sdeltagamma}
S(\delta_{\gamma}) =  e_{\gamma} \text{ for all }\gamma\in\Gamma.
\end{equation}
In frame theory, $S$ is the \textit{synthesis operator} for $E(\Gamma)$.  If $S$ is a bounded operator, then its adjoint $S^*: L^2(\mu) \rightarrow \ell^2(\Gamma)$ is also bounded and is given by
\begin{equation*}
(S^*f)_{\gamma} = \langle e_{\gamma}, f\rangle_{L^2(\mu)}.
\end{equation*}
$S^*$ is called the \textit{analysis operator} for $E(\Gamma)$.

We can now connect $S$ to the upper frame bound of $E(\Gamma)$.

\begin{lemma}\label{Lemma:BoundedImpliesUpperFrame}
Given a measure $\mu$ with compact support and a countable set $\Gamma$, the operator $S:\ell^2(\Gamma) \rightarrow L^2(\mu)$ from Equation (\ref{Eqn:Sdeltagamma}) is bounded  if and only if the set $E(\Gamma)$ has an upper frame bound, i.e. there exists a constant $B$ such that \[\sum_{\gamma \in \Gamma} |\langle e_{\gamma}, f \rangle_{L^2(\mu)}|^2 \leq B\|f\|_{L^2(\mu)}^2 \qquad \forall f \in L^2(\mu).\]  In that case, the spectral function $c_{\Gamma}$ is bounded.
\end{lemma}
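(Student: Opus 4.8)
The plan is to exploit the standard duality between the synthesis operator $S$ and the analysis-type operator it determines, treating $S$ as a densely defined operator on the finite linear span $\mathcal{D}$ of $\{\delta_{\gamma} : \gamma \in \Gamma\}$, which is dense in $\ell^2(\Gamma)$. Since $S$ is \emph{a priori} only defined on $\mathcal{D}$ by $S(\delta_\gamma) = e_\gamma$, the statement ``$S$ is bounded'' is to be read as ``$S$ extends to a bounded operator on all of $\ell^2(\Gamma)$,'' equivalently, ``$S$ is bounded on the dense subspace $\mathcal{D}$.'' I would prove the two implications separately, keeping all computations on $\mathcal{D}$ until a bound is in hand.

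First I would establish the implication that an upper frame bound forces $S$ to be bounded, which is the direction requiring a genuine estimate. Given a finitely supported sequence $u = \sum_{\gamma} c_{\gamma}\, \delta_{\gamma} \in \mathcal{D}$, we have $Su = \sum_{\gamma} c_{\gamma}\, e_{\gamma} \in L^2(\mu)$, and expanding one factor of $Su$ using linearity of the inner product in its first argument gives $\|Su\|^2 = \sum_{\gamma} c_{\gamma}\, \langle e_{\gamma}, Su\rangle$. Applying the Cauchy--Schwarz inequality in $\ell^2(\Gamma)$ and then the assumed upper frame bound to the vector $f = Su$ yields
\[
\|Su\|^2 \le \Big(\sum_{\gamma} |c_{\gamma}|^2\Big)^{1/2}\Big(\sum_{\gamma} |\langle e_{\gamma}, Su\rangle|^2\Big)^{1/2} \le \|u\|_{\ell^2}\,\sqrt{B}\,\|Su\|.
\]
Dividing by $\|Su\|$ (the case $Su = 0$ being trivial) gives $\|Su\| \le \sqrt{B}\,\|u\|_{\ell^2}$ on the dense subspace $\mathcal{D}$, so $S$ extends to a bounded operator with $\|S\| \le \sqrt{B}$. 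The point of this Cauchy--Schwarz argument is that it never invokes $S^*$, which is exactly what allows us to deduce boundedness of $S$ in the first place.

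For the converse, suppose $S$ is bounded. Then its adjoint $S^* : L^2(\mu) \to \ell^2(\Gamma)$ is bounded with $\|S^*\| = \|S\|$, and testing against the basis vectors $\delta_{\gamma}$ identifies its coordinates as $(S^* f)_{\gamma} = \langle e_{\gamma}, f\rangle$, exactly the formula recorded before the statement. Consequently, for every $f \in L^2(\mu)$,
\[
\sum_{\gamma \in \Gamma} |\langle e_{\gamma}, f\rangle|^2 = \|S^* f\|_{\ell^2}^2 \le \|S\|^2\, \|f\|^2,
\]
which is the desired upper frame bound with $B = \|S\|^2$; combined with the previous paragraph, this also shows the optimal constant equals $\|S\|^2$. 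The boundedness of $c_{\Gamma}$ is then immediate: once an upper frame bound $B$ exists, part (3) of Lemma \ref{lem:cproperties} gives $c_{\Gamma}(t) \le B$ for all $t$. Alternatively, one applies the frame inequality directly to $f = e_{-t}$, using that $\|e_{-t}\|^2 = \int_{\mathbb{R}} |e_{-t}(x)|^2 \, \mathrm{d}\mu(x) = \mu(\mathbb{R})$ is a finite constant independent of $t$ (finite because $\mu$ has compact support), to conclude $c_{\Gamma}(t) = \sum_{\gamma} |\langle e_{\gamma}, e_{-t}\rangle|^2 \le B\, \mu(\mathbb{R})$.

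As for the main difficulty: there is no deep obstacle here, since this is a standard fact from frame theory. The only point demanding care is the functional-analytic bookkeeping — namely, ensuring that every manipulation is carried out on the dense domain $\mathcal{D}$ of finitely supported sequences before passing to the bounded extension, and invoking the adjoint identity $(S^* f)_{\gamma} = \langle e_{\gamma}, f\rangle$ only after $S$ is already known to be bounded. The Cauchy--Schwarz estimate above is precisely the device that keeps these steps in the correct logical order.
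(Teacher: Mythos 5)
Your proof is correct, and for the substantive direction it takes a genuinely different route from the paper. The direction ``$S$ bounded $\Rightarrow$ upper frame bound'' is identical in both: $\sum_{\gamma}|\langle e_{\gamma},f\rangle|^2 = \|S^*f\|_{\ell^2(\Gamma)}^2 \le \|S\|^2\|f\|^2$. For the converse, the paper never estimates $\|Su\|$ at all: it defines the analysis operator $A:L^2(\mu)\to\ell^2(\Gamma)$, $(Af)_{\gamma}=\langle e_{\gamma},f\rangle$, observes that the frame inequality is precisely the statement that $A$ is bounded, and then obtains $S$ as the bounded adjoint $A^*$. You instead prove the norm estimate $\|Su\|\le\sqrt{B}\,\|u\|_{\ell^2}$ directly on finitely supported vectors, expanding $\|Su\|^2=\sum_{\gamma}c_{\gamma}\langle e_{\gamma},Su\rangle$ and applying Cauchy--Schwarz plus the frame bound at $f=Su$ (note the finite sum is dominated by the full sum over $\Gamma$, so this is legitimate). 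What the paper's argument buys is brevity: boundedness of $A$ is a restatement of the hypothesis and no estimate is needed; the small price is that one must identify $A^*\delta_{\gamma}=e_{\gamma}$ to know $A^*$ really is the synthesis operator (the paper asserts this without computation), and the statement tacitly concerns the bounded extension of $S$ rather than $S$ on its original domain. Your argument buys exactly that missing bookkeeping: everything stays on the dense span of $\{\delta_{\gamma}\}$, which is the honest reading of ``$S$ is bounded'' for an operator defined only on basis vectors, and you get the sharp constant $B_{\mathrm{opt}}=\|S\|^2$ as a by-product. Both proofs conclude boundedness of $c_{\Gamma}$ the same way, from part (3) of Lemma \ref{lem:cproperties}; in your alternative direct bound via $f=e_{-t}$, note that finiteness of $\|e_{-t}\|^2=\mu(\mathbb{R})$ comes from $\mu$ being a finite (probability) measure rather than from compact support per se, though this is harmless under the paper's standing assumptions.
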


\begin{proof}
If $S$ is bounded, we exhibit the upper frame bound $B = \|S^*\|_{op}^2 = \|S\|_{op}^2$:
\begin{equation*}\label{Eqn:SStarf}
\sum_{\gamma\in\Gamma} |\langle e_{\gamma}, f\rangle_{L^2(\mu)}|^2 = \|S^* f\|_{\ell^2(\Gamma)}^2 \leq \|S^*\|_{op}^2\|f\|_{L^2(\mu)}^2 = \|S\|_{op}^2\|f\|_{L^2(\mu)}^2.
\end{equation*}
Conversely, suppose the set $E(\Gamma)$ has an upper frame bound $B$.  Define the operator $A:L^2(\mu) \rightarrow \ell^2(\Gamma)$ by
\begin{equation*}
(Af)_{\gamma} =  \langle e_{\gamma}, f\rangle_{L^2(\mu)}.
\end{equation*}
Then $A$ is bounded because
\begin{equation*}
\|Af\|_{\ell^2(\Gamma)}^2 = \sum_{\gamma\in\Gamma} |\langle e_{\gamma}, f\rangle_{L^2(\mu)}|^2 \leq B\|f\|_{L^2(\mu)}^2.
\end{equation*}
Then $A$ must have a bounded adjoint $A^*$, and $A^*$ is the synthesis operator we called $S$ above.  

The boundedness of $c_{\Gamma}$ follows from these properties and Lemma \ref{lem:cproperties}.
\end{proof}

Throughout the remainder of this section, we assume for $\mu$ and $\Gamma$ that the operator $S$ is bounded.

The operators $S$ and $S^*$ relate to the function $c_{\Gamma}$ in a variety of ways.  Let $t\in\mathbb{R}$.  Then
\begin{equation}\label{Eqn:cGammaAndA}
\begin{split}
c_{\Gamma}(t)
& = \sum_{\gamma\in\Gamma} |\widehat{\mu}(t+\gamma)|^2 
= \sum_{\gamma\in\Gamma} | \langle e_{\gamma}, e_{-t} \rangle_{L^2(\mu)}|^2\\
& = \sum_{\gamma\in\Gamma} |\langle S\delta_\gamma, e_{-t}\rangle_{L^2(\mu)}|^2
=  \sum_{\gamma\in\Gamma} |\langle \delta_\gamma, S^* e_{-t}\rangle_{\ell^2(\Gamma)}|^2\\
& \underbrace{=}_{\text{Parseval}} \|S^*e_{-t}\|^2 
= \langle e_{-t}, SS^*e_{-t}\rangle_{L^2(\mu)}.
\end{split}
\end{equation}
Give the operator $SS^*$ the name $P$; this is the \textit{frame operator} for $E(\Gamma)$. $P$ is a bounded operator because $S$ and $S^*$ are both bounded operators, and $\|P\| = \|S\|^2$.  Therefore, Equation (\ref{Eqn:cGammaAndA}) can be written
\begin{equation}\label{Eqn:cGammaP}
c_{\Gamma}(t) = \langle e_{-t}, Pe_{-t}\rangle_{L^2(\mu)}.
\end{equation}
We note that we do not know anything about the spectrum of $P$ because we do not assume the existence of a lower frame bound.

\subsection{Complex analytic properties of $c_{\Gamma}$}\label{Subsec:ComplexAnalytic}

Given a  measure $\mu$ with compact support and a countable set of real numbers $\Gamma$, the orthogonality and other basis properties of $E(\Gamma)$ with respect to $\mu$ are determined by the spectral function $c_\Gamma$ as defined in Equation (\ref{eqn:cdefn}).   In this section, we prove that the function $c_\Gamma$ has an
entire analytic extension to the whole complex plane, and we derive properties of the
extension which will be needed later.  This is not a new result, but it is perhaps not clearly portrayed in the literature.  We include an overview of the proof here for the interested reader.   

\begin{thm}\label{Thm:cGammaAnalytic}
Assume there exists a bounded synthesis operator $S:\ell^2(\Gamma) \rightarrow L^2(\mu)$ with $S(\delta_{\gamma}) = e_{\gamma}$ for all $\gamma\in \Gamma$. Then the function $c_{\Gamma}(t)$ has an entire analytic extension to the complex plane $\mathbb{C}$.
\end{thm}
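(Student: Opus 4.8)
The plan is to exhibit an explicit entire function that agrees with $c_\Gamma$ on $\mathbb{R}$. Because $\mu$ has compact support, say $\operatorname{supp}\mu \subseteq [-R,R]$, the Fourier transform $\widehat\mu(t) = \int e^{2\pi i x t}\,d\mu(x)$ already makes sense for complex $t$ and defines an entire function (Paley--Wiener), with $|\widehat\mu(u+iv)| \le e^{2\pi R|v|}$. Since $\mu$ is real, $\overline{\widehat\mu(t)} = \widehat\mu(-t)$ for real $t$, so for real $t$ we may write $|\widehat\mu(\gamma+t)|^2 = \widehat\mu(\gamma+t)\,\widehat\mu(-\gamma-t)$. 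This suggests defining, for $z\in\mathbb{C}$, the candidate extension $F(z) := \sum_{\gamma\in\Gamma} h(\gamma+z)$ where $h(w) := \widehat\mu(w)\widehat\mu(-w)$ is entire; by construction $F(t)=c_\Gamma(t)$ for real $t$. It remains to show that this series converges locally uniformly on $\mathbb{C}$, for then $F$ is entire and is the desired extension.

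First I would record the two summability estimates that come from the boundedness of $S$. For fixed $z=u+iv$ the function $f_z(x):=e^{2\pi i x z}$ lies in $L^2(\mu)$ (it is bounded on the support), and a direct computation gives $\widehat\mu(\gamma+z) = \langle e_{-\gamma}, f_z\rangle_{L^2(\mu)}$ and $\widehat\mu(-\gamma-z) = \langle e_\gamma, g_z\rangle_{L^2(\mu)}$ with $g_z(x):=e^{-2\pi i x z}$. By Lemma \ref{Lemma:BoundedImpliesUpperFrame} the family $E(\Gamma)$ has an upper frame bound $B=\|S\|_{op}^2$, and the same estimate holds for $E(-\Gamma)$ (conjugating $f$ shows $\sum_\gamma|\langle e_{-\gamma},f\rangle|^2 = \sum_\gamma|\langle e_\gamma,\bar f\rangle|^2 \le B\|f\|^2$). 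Hence, using $\|f_z\|_{L^2(\mu)}^2 = \int e^{-4\pi x v}\,d\mu \le e^{4\pi R|v|}$ and likewise for $g_z$,
\[
\sum_{\gamma\in\Gamma}|\widehat\mu(\gamma+z)|^2 \le B\,e^{4\pi R|v|}, \qquad \sum_{\gamma\in\Gamma}|\widehat\mu(-\gamma-z)|^2 \le B\,e^{4\pi R|v|}.
\]

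With these in hand, the Cauchy--Schwarz inequality yields
\[
\sum_{\gamma\in\Gamma}|h(\gamma+z)| \le \Big(\sum_\gamma|\widehat\mu(\gamma+z)|^2\Big)^{1/2}\Big(\sum_\gamma|\widehat\mu(-\gamma-z)|^2\Big)^{1/2} \le B\,e^{4\pi R|v|},
\]
so the partial sums $F_N(z):=\sum_{j\le N} h(\gamma_j+z)$, for any fixed enumeration of $\Gamma$, are entire and uniformly bounded by $B e^{4\pi R M}$ on each horizontal strip $|\operatorname{Im} z|\le M$. Moreover the bound shows the series converges absolutely, hence the $F_N$ converge pointwise on $\mathbb{C}$. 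I would then invoke the Vitali--Porter theorem (equivalently, Montel's theorem together with pointwise convergence): a locally uniformly bounded sequence of holomorphic functions that converges pointwise converges locally uniformly to a holomorphic limit. Thus $F_N\to F$ locally uniformly, $F$ is entire, and $F=c_\Gamma$ on $\mathbb{R}$, completing the proof.

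The only real subtlety, and the step I expect to be the crux, is passing from the real-variable control $c_\Gamma(t)\le B$ to a bound valid on complex strips: the naive Paley--Wiener estimate $|\widehat\mu(\gamma+z)|\le e^{2\pi R|v|}$ carries no decay in $\gamma$ and is useless for summation. The resolution is to keep the upper frame (Bessel) inequality itself as the source of $\gamma$-summability, applied not to $\mu$ but to the exponentially reweighted vectors $f_z$ and $g_z$; the price is only the harmless factor $e^{4\pi R|v|}$, which is locally bounded. Once summability is secured uniformly on strips, normal-families compactness does the rest and no delicate interchange of limits is needed.
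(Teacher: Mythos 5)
Your proof is correct, but it follows a genuinely different route from the paper's. The paper argues operator-theoretically: writing $c_{\Gamma}(t) = \langle 1, e^{itM_x}Pe^{-itM_x}1\rangle$ with $P = SS^*$ (Lemma \ref{lem:cphi}), expanding the conjugated operator as the adjoint-action series $\sum_{n}\frac{t^n}{n!}[\mathrm{ad}(iM_x)]^n(P)$ (Lemma \ref{Lemma:PowerSeries}), proving operator-norm absolute convergence for all complex $t$ from the bound $\|[\mathrm{ad}(iM_x)]^n(P)\|\le 2^n\|M_x\|^n\|P\|$ (Lemma \ref{Lemma:AbsConv}), and then applying the state $\phi(B)=\langle 1,B1\rangle$. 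You instead work function-theoretically: Paley--Wiener entirety of $\widehat\mu$, the factorization $|\widehat\mu(\gamma+t)|^2=\widehat\mu(\gamma+t)\,\widehat\mu(-\gamma-t)$ valid on $\mathbb{R}$, the Bessel bound from Lemma \ref{Lemma:BoundedImpliesUpperFrame} applied to the exponentially reweighted vectors $f_z$, Cauchy--Schwarz, and Vitali--Montel. Both proofs consume exactly the same hypotheses (boundedness of $S$, via $B=\|P\|=\|S\|_{op}^2$, and compactness of $\mathrm{supp}\,\mu$, via $\|M_x\|=2\pi R$ versus the Paley--Wiener estimate), but they buy different things. The paper's route establishes entirety of the operator-valued map $t\mapsto e^{it\,\mathrm{ad}M_x}(P)$, which it reuses verbatim in Proposition \ref{Prop:cGammaDerivativeBdd}; its growth bound, however, is $\|P\|e^{2\|M_x\||t|}$, exponential in every complex direction, including along $\mathbb{R}$. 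Your route produces an explicit series formula for the extension and the sharper bound $|F(z)|\le B e^{4\pi R|\mathrm{Im}\,z|}$: boundedness on horizontal strips, hence on $\mathbb{R}$ itself (consistent with $c_\Gamma\le B$), with exponential type only in the imaginary direction; a Cauchy estimate on a strip then recovers the bounded-derivative conclusion of Proposition \ref{Prop:cGammaDerivativeBdd} as a corollary, and your identification of the correct summability mechanism (Bessel applied to $f_z$, not termwise Paley--Wiener bounds) is indeed the crux. One bookkeeping remark: under the paper's convention $\langle f,g\rangle = \int f\bar g\,d\mu$ (so that $\langle e_{\gamma_1},e_{\gamma_2}\rangle = \widehat\mu(\gamma_1-\gamma_2)$), your identity should read $\widehat\mu(\gamma+z)=\langle e_\gamma, \overline{f_z}\rangle$ rather than $\langle e_{-\gamma},f_z\rangle$; since only absolute values enter the frame estimates, this changes nothing in the argument.
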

\begin{proof}
We rely on a series of lemmas for the proof.

\begin{lemma}\label{lem:cphi} Let $M_x$ denote the multiplication operator $f(x) \mapsto 2\pi x f(x)$, and let $P = SS^*$ as above.  Then
\[ c_{\Gamma}(t) = \langle 1, e^{itM_x}Pe^{-itM_x}1\rangle_{L^2(\mu)}.\]
\end{lemma}
\begin{proof}
Since $\mu$ has compact support, $M_x$ is bounded for all $x\in\mathbb{R}$.  We can also define the operator $e^{itM_x}$, which maps $f(x)$ to  $e^{2\pi itx}f(x)$.  To see this, use the power series expansion of $e^{itM_x}$ to find

\begin{equation*}
[e^{itM_x}f](x) = \sum_{n=0}^{\infty} \frac{(it)^nM_x^n}{n!}f(x) = e^{2\pi itx}f(x).
\end{equation*} 

When we apply the operator $e^{itM_x}$ to the constant function $1$, we just get $e^{2\pi i tx}$.  The adjoint of the operator $e^{itM_x}$ is $e^{-itM_x}$.  Therefore we have
\begin{equation*}
c_{\Gamma}(t) 
\:\:
\underbrace{=}_{(\ref{Eqn:cGammaP})} 
\:\:
\langle e_{-t}, Pe_{-t}\rangle_{L^2(\mu)} 
= \langle 1, e^{itM_x}Pe^{-itM_x}1 \rangle_{L^2(\mu)}.
\end{equation*} \end{proof}

\begin{lemma}\label{Lemma:PowerSeries}
The operators \[e^{itM_x}Pe^{-itM_x} \quad \textrm{ and } \quad e^{it\,\rm ad\it M_x}(P)\] \emph{(}where the ad notation $[\mathrm{ad}\, A](B) $ means the commutator $[A,B]$\emph{)}
have exactly the same derivatives at $t=0$.
\end{lemma}
\begin{proof}
We will show that the two operator functions in the lemma are
analytic.  In the right-hand side above, we have conjugated the operator $P$ by the unitary operator $e^{itM_x}$.  We will now find the power series in $t$ associated with 
$e^{itM_x}Pe^{-itM_x}$.  
To write the power series, we need the derivatives
\begin{equation*}
\frac{d^n}{dt^n}\Bigl( e^{itM_x}Pe^{-itM_x} \Bigr)\Big|_{t=0}.
\end{equation*}

We have
\begin{equation}\label{Eqn:FirstDerivative}
\frac{d}{dt}\Bigl(e^{itM_x}Pe^{-itM_x}\Bigr) = e^{itM_x}(iM_x)Pe^{-itM_x} + e^{itM_x}P (-i)M_xe^{-itM_x}
\end{equation}
because the operators $M_x$ and $e^{itM_x}$ commute.
Factoring $e^{itM_x}$ and $e^{-itM_x}$ leaves the commutator $iM_xP - PiM_x = [iM_x, P]$, which can also be written as $[\text{ad}(iM_x)](P)$.

Therefore, at $t = 0$, we are left simply with the commutator
\begin{equation*}
\frac{d}{dt}\Bigl(e^{itM_x}Pe^{-itM_x}\Bigr)\Big|_{t=0} = e^{itM_x}[iM_x, P]e^{-itM_x}\Big|_{t=0}
=[\text{ad}(iM_x)](P).
\end{equation*}  

When we calculate the second derivative, the computations are exactly the same, except now $[iM_x, P]$ appears in the place of $P$ in Equation (\ref{Eqn:FirstDerivative}).  We have
\begin{equation*}
\frac{d^2}{dt^2}\Bigl( e^{itM_x}Pe^{-itM_x}\Bigr)= e^{itM_x}[iM_x, [iM_x, P]]e^{-itM_x},
\end{equation*}
and at $t=0$, the derivative is $[iM_x, [iM_x, P]] = [\text{ad}(iM_x)]^2(P)$.

In the same manner, the $n^{\textrm{th}}$ derivative at $t = 0$ is
\begin{equation*}
\frac{d^n}{dt^n}\Bigl( e^{itM_x}Pe^{-itM_x} \Bigr)\Big|_{t=0}
= [\text{ad}(iM_x)]^n(P).
\end{equation*}

Equipped with all the derivatives at $t=0$, we can now write down the power series associated with $e^{itM_x}Pe^{-itM_x}$:
\begin{equation}\label{Eqn:PowerSeriesExpP}
\sum_{n=0}^{\infty} \frac{t^n}{n!} 
\Biggl(\frac{d^n}{dt^n}\Bigl( e^{itM_x}Pe^{-itM_x} \Bigr)\Big|_{t=0}\Biggr)
= \sum_{n=0}^{\infty} \frac{t^n}{n!}[\text{ad}(iM_x)]^n(P).
\end{equation}
\end{proof}

\begin{lemma}\label{Lemma:AbsConv}
The power series for $e^{it\,\rm ad\it M_x}(P)$ in Lemma \ref{Lemma:PowerSeries} is absolutely convergent for all $t\in\mathbb{C}$ with respect to the operator norm.  Therefore  $e^{it\,\rm ad\it M_x}(P)$ is entire analytic on $\mathbb{C}$. 
\end{lemma}
\begin{proof}
First,
\begin{equation*}
\| [iM_x, P]\| = \| iM_xP - PiM_x \| \leq 2\|M_x\| \|P\|.
\end{equation*}
By the same reasoning,
\begin{equation*}
\begin{split}
\| [\text{ad}(iM_x)]^2(P) \| 
& =\| [iM_x, [iM_x, P]] \| \leq 2\|M_x\| \|[M_x, P]\|\\
& \leq 2^2 \|M_x\|^2\|P\|.
\end{split}
\end{equation*}
Again, by induction, we find
\begin{equation*}\label{Ineq:adMxPower}
\| [\text{ad}(iM_x)]^n(P) \|\leq 2^n\|M_x\|^n \|P\|. 
\end{equation*}

Going back to Equation (\ref{Eqn:PowerSeriesExpP}), we estimate the norm of the operator given by the power series:
\begin{equation}
\Bigg\|
\sum_{n=0}^{\infty} \frac{t^n}{n!}[\text{ad}(iM_x)]^n(P)
\Bigg\|
\leq
\|P\| \sum_{n=0}^{\infty} \frac{(2\|M_x\| \,|t|)^n}{n!} = \|P\|e^{2\|M_x\|\,|t|}.
\end{equation}
Notice that here, $t\in\mathbb{C}$, not just $\mathbb{R}$.  Therefore, by Theorem 10.6 in \cite{Rud87},  the map from $t \in \mathbb{C}$ to the operator $e^{it\,\rm ad\it M_x}(P)$ is entire analytic on $\mathbb{C}$.

Since $\|M_x\| = \textrm{diam}(\textrm{supp}(\mu))$, in our Bernoulli examples we can actually compute $\|M_x\| =\frac{\lambda}{1-\lambda}$.
\end{proof}

The power series for $e^{itM_x}Pe^{-itM_x}$ is only defined for $t\in\mathbb{R}$, but it agrees with the power series for $e^{it\,\rm ad\it M_x}(P)$ for all $t\in\mathbb{R}$.  Therefore, the operators are the same for all $t \in \mathbb{R}$, and $e^{itM_x}Pe^{-itM_x}$ has an entire analytic extension to the complex plane $\mathbb{C}$.  Now, we complete the proof of Theorem \ref{Thm:cGammaAnalytic} by showing that the spectral function $c_{\Gamma}$ also has an analytic extension to $\mathbb{C}$.

Consider the linear functional (state) $\phi$ on the bounded linear operators on $L^2(\mu)$ defined by
$ \phi(B) = \langle 1, B1\rangle_{L^2(\mu)}$, where  $1$  denotes the constant function whose value is always $1$.
By Cauchy-Schwarz, $\phi$ has norm $1$ and is therefore bounded.  By Lemma \ref{lem:cphi},  $c_{\Gamma}(t) = \phi(e^{itM_x}Pe^{-itM_x})$.  Applying $\phi$ to the operator $e^{itM_x}Pe^{-itM_x} = e^{it\,\rm ad\it M_x}(P)$ preserves analyticity in $t$, since $\phi$ does not disturb the $t$ variable.  Using the $\textrm{ad}$ representation of $e^{itM_x}Pe^{-itM_x}$, we have
\[ \phi\Bigl(e^{it\,\rm ad\it M_x}(P)\Bigr) 
= \sum_{n=0}^{\infty} \frac{t^n}{n!} \phi( [\rm ad \it(iM_x)]^n(P)).\]
This last expression is simply a complex function of the variable $t$.  

Finally, we claim that applying $\phi$ does not change the radius of convergence of the power series, since
\begin{eqnarray*} \sum_{n=0}^{\infty} \Big|\frac{t^n}{n!}\Big| 
\:\Big| \phi( [\rm ad \it(iM_x)]^n(P)) \Big|
&\leq & 
 \sum_{n=0}^{\infty} \Big|\frac{t^n}{n!}\Big| 
\:\Big\| [\rm ad \it(iM_x)]^n(P) \Big\|\\ &\leq& 
\sum_{n=0}^{\infty} \Big|\frac{t^n}{n!}\Big| 2^n \|M_x\|^n\|P\|.  \end{eqnarray*} \end{proof}

\subsection{The derivative of $c_{\Gamma}$}

We now give an estimate on the first derivative of the spectral function $c_\Gamma$.
\begin{prop}\label{Prop:cGammaDerivativeBdd}
Assume there exists a bounded operator $S:\ell^2(\Gamma) \rightarrow L^2(\mu)$ with $S(\delta_{\gamma}) = e_{\gamma}$ for all $\gamma\in \Gamma$.  Then $\frac{d}{dt} c_{\Gamma}(t)$ is   bounded on $\mathbb{R}$.
\end{prop}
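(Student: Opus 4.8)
The plan is to differentiate the operator-conjugation formula for $c_\Gamma$ established in the previous subsection, and then to exploit the fact that conjugation by a unitary preserves the operator norm, so that the resulting estimate is uniform in $t$. First I would record the first-derivative computation already carried out inside the proof of Lemma~\ref{Lemma:PowerSeries}. The identity in Equation~(\ref{Eqn:FirstDerivative}) is valid for every real $t$, not merely at $t=0$: since $M_x$ commutes with $e^{\pm itM_x}$, factoring out the exponentials yields
\[\frac{d}{dt}\bigl(e^{itM_x}Pe^{-itM_x}\bigr) = e^{itM_x}[iM_x,P]e^{-itM_x}\]
for all $t\in\mathbb{R}$. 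Combining this with Lemma~\ref{lem:cphi} and the bounded state $\phi(B)=\langle 1, B1\rangle_{L^2(\mu)}$ introduced in the proof of Theorem~\ref{Thm:cGammaAnalytic}, and using that $\phi$ does not involve the $t$ variable, I would conclude
\[\frac{d}{dt}c_\Gamma(t) = \phi\bigl(e^{itM_x}[iM_x,P]e^{-itM_x}\bigr).\]

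Next I would estimate the right-hand side. Because $M_x$ is multiplication by the real function $2\pi x$, it is self-adjoint, so $e^{itM_x}$ is unitary for each real $t$ and conjugation by it preserves the operator norm. Since $\phi$ has norm $1$ by Cauchy--Schwarz (with $\|1\|_{L^2(\mu)}^2=\mu(\mathbb{R})=1$), this gives
\[\Bigl|\tfrac{d}{dt}c_\Gamma(t)\Bigr| \le \bigl\|e^{itM_x}[iM_x,P]e^{-itM_x}\bigr\| = \|[iM_x,P]\| \le 2\|M_x\|\,\|P\|,\]
the final inequality being the $n=1$ commutator bound from Lemma~\ref{Lemma:AbsConv}. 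The right-hand side is independent of $t$, which proves the claim; in the Bernoulli case it equals $\frac{2\lambda}{1-\lambda}\|P\|$.

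The main thing to get right is to resist bounding $c_\Gamma'$ by differentiating the power series of Lemma~\ref{Lemma:AbsConv} term by term. That route produces an estimate of order $e^{2\|M_x\|\,|t|}$, which is finite for each fixed $t$ but blows up as $|t|\to\infty$ and so fails to establish boundedness on all of $\mathbb{R}$. The essential observation is that the exact derivative is itself a unitary conjugate of the single fixed operator $[iM_x,P]$, so its norm is constant in $t$; it is this unitary invariance of the operator norm, rather than analyticity alone, that delivers the $t$-uniform bound.
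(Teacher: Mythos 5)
Your proof is correct and takes essentially the same route as the paper's: differentiate $c_\Gamma(t)=\phi\bigl(e^{itM_x}Pe^{-itM_x}\bigr)$, recognize the derivative as $\phi$ applied to the unitary conjugate of the fixed commutator $[iM_x,P]$, and bound it by $\|\phi\|\cdot 2\|M_x\|\,\|P\| = \frac{2\lambda}{1-\lambda}\|P\|$, uniformly in $t$. The only difference is presentational: you make explicit the unitarity of $e^{\pm itM_x}$ (and the warning against term-by-term differentiation of the power series), which the paper leaves implicit in its factors of $1$.
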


\begin{proof}
We calculate the derivative of $c_{\Gamma}$, making use of the linear functional $\phi$ defined above.  We can now write
\[ c_{\Gamma}(t) = \phi(e^{itM_x}Pe^{-itM_x}).\]
Then we find \begin{equation*}
\begin{split}
\Big| \frac{d}{dt} c_{\Gamma}(t) \Big|
& = \Big| \phi\Bigl(\frac{d}{dt}e^{itM_x}Pe^{-itM_x}\Bigr)\Big|\\
& = \Big| \phi\Bigl(e^{itM_x}\textrm{ad}(iM_x)Pe^{-itM_x}\Bigr)\Big|\\
& \leq \|\phi\| \cdot 1 \cdot 2\|M_x\| \|P\|\cdot 1\\
& = 2 \|P\| \frac{\lambda}{1-\lambda},
\end{split}
\end{equation*}
which is independent of $t$.   
\end{proof}

We have now established that the spectral functions $ c_{\Gamma}$ are entire analytic and have bounded derivative on the real line.  It follows that they have at most exponential growth in the whole complex plane.  When spectral functions are non-constant, there is very little explicit information about them.  The corollary below serves to remedy this by giving  by applying the Weierstrass functions from the theory of entire functions to our $c_{\Gamma}$ and exploiting the results we obtained above.

\begin{cor}
Let $\mu$ be a measure satisfying the condition of Theorem \ref{Thm:cGammaAnalytic} and let $\Gamma \subset \mathbb{R}$ be a countable set generated recursively and having a constant scale factor, as in Equation (\ref{eqn:ngamma}).  Assume that $E(\Gamma)$ is orthogonal in $L^2(\mu)$ but that the spectral function $c_{\Gamma}$ is not the constant $1$ function.  Then there is an entire analytic function $G_{\Gamma}$ on $\mathbb{C}$ such that for all $t \in \mathbb{C}$,  $c_{\Gamma}$ satisfies \[c_{\Gamma}(t) = 1+G_{\Gamma}(t) \prod_{\gamma \in \Gamma}\Bigr(1+\frac{t}{\gamma} \Bigr) \exp\Biggr(\sum_{k=1}^{n(\gamma)} \frac{1}{k} \Bigr(-\frac{t}{\gamma} \Bigr)^k \Biggr), \] where the infinite product on the right hand side is absolutely convergent on every compact subset in $\mathbb{C}$.
\end{cor}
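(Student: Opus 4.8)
The plan is to apply the Weierstrass factorization theorem to the entire function $f(t) := c_{\Gamma}(t) - 1$. By Theorem \ref{Thm:cGammaAnalytic} the spectral function $c_{\Gamma}$, and hence $f$, extends to an entire function on $\mathbb{C}$, and by hypothesis $f \not\equiv 0$. The first step is to locate a distinguished family of zeros of $f$. Fixing $\gamma_0 \in \Gamma$ and evaluating the defining series (\ref{eqn:cdefn}) at $t = -\gamma_0$,
\[
c_{\Gamma}(-\gamma_0) = \sum_{\gamma\in\Gamma} |\widehat{\mu}(\gamma-\gamma_0)|^2,
\]
the term $\gamma = \gamma_0$ contributes $|\widehat{\mu}(0)|^2 = 1$ (since $\mu$ is a probability measure), while for $\gamma \neq \gamma_0$ the orthogonality of $E(\Gamma)$ together with Lemma \ref{lem:zero} forces $\widehat{\mu}(\gamma-\gamma_0) = \langle e_{\gamma}, e_{\gamma_0}\rangle = 0$. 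Hence $c_{\Gamma}(-\gamma_0) = 1$, i.e. $f(-\gamma_0) = 0$. Thus $\{-\gamma : \gamma \in \Gamma\}$ is a set of zeros of $f$, and because the elements of the set $\Gamma$ are distinct, these points are distinct.

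Second, I would control the distribution of these zeros well enough to build a convergent canonical product. Because $\Gamma$ is generated recursively with a constant scale factor as in Equation (\ref{eqn:ngamma}), its counting function grows sub-linearly: for the model set $\Gamma(\tfrac{1}{2n})$ one has $\#\{\gamma\in\Gamma : |\gamma|\le R\} = O(R^{\alpha})$ with $\alpha = \frac{\log 2}{\log(2n)} < 1$ (the exponent is unchanged by the dilation $p\Gamma$). In particular $|\gamma|\to\infty$ and the convergence exponent of $\{\gamma\}$ is finite. Choosing the integers $n(\gamma)$ large enough that $\sum_{\gamma}(R/|\gamma|)^{n(\gamma)+1} < \infty$ for every $R>0$ (here $n(\gamma)=0$ already suffices, since $\alpha<1$ gives $\sum_{\gamma}|\gamma|^{-1}<\infty$), the Weierstrass canonical product
\[
W(t) := \prod_{\gamma\in\Gamma\setminus\{0\}} E_{n(\gamma)}\!\Bigl(-\tfrac{t}{\gamma}\Bigr),
\qquad E_p(z) = (1-z)\exp\Bigl(\sum_{k=1}^{p}\tfrac{z^k}{k}\Bigr),
\]
converges absolutely and uniformly on compact subsets of $\mathbb{C}$; this is precisely the product displayed in the statement, and it defines an entire function whose zeros are exactly the points $-\gamma$, each simple.

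Finally, I would set $G_{\Gamma}(t) := f(t)/W(t) = (c_{\Gamma}(t)-1)/W(t)$ and verify that $G_{\Gamma}$ is entire. Away from $\{-\gamma\}$ the denominator is nonvanishing, so $G_{\Gamma}$ is holomorphic there; at each point $-\gamma$ the denominator $W$ has a simple zero while the numerator $f$ vanishes to order at least one by the first step, so the singularity is removable. Hence $G_{\Gamma}$ extends to an entire function and $c_{\Gamma} = 1 + G_{\Gamma}\cdot W$, which is the asserted identity. (When $0\in\Gamma$, as for $\Gamma(\tfrac{1}{2n})$, the corresponding factor is ill-defined, but the simple zero of $f$ at the origin is then simply absorbed into the free entire factor $G_{\Gamma}$.)

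The main obstacle is the convergence bookkeeping of the second step: one must extract from the \emph{recursively generated, constant scale factor} hypothesis a genuine growth estimate on $\Gamma$ in order to guarantee that a canonical product over $\{-\gamma\}$ converges and to pin down admissible exponents $n(\gamma)$. By contrast, the orthogonality computation and the removable-singularity argument are routine once this estimate is in hand. It is worth emphasizing that one need \emph{not} identify the full zero set of $c_{\Gamma}-1$: any zeros not of the form $-\gamma$ are automatically carried by the entire factor $G_{\Gamma}$, which is why plain Weierstrass factorization suffices and the order-$\le 1$ growth furnished by Proposition \ref{Prop:cGammaDerivativeBdd} is not required for the stated conclusion.
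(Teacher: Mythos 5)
Your proposal is correct and follows essentially the same route as the paper: both identify the zeros of $c_{\Gamma}-1$ at the points $-\Gamma$ via the identical orthogonality computation, and then invoke Weierstrass factorization to write $c_{\Gamma}-1$ as the canonical product over $-\Gamma$ times an entire cofactor $G_{\Gamma}$. The differences are only in the level of detail: you make the product's convergence explicit via the counting-function growth of $\Gamma$ and construct $G_{\Gamma}$ as a quotient with removable singularities (also patching the ill-defined $\gamma=0$ factor, which the paper passes over), whereas the paper simply cites Theorem 15.10 of Rudin.
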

\begin{proof}
We define $c_{\Gamma}$ as in Equation (\ref{eqn:cdefn}) and let $D_{\Gamma}(t) = c_{\Gamma}(t)-1$.  Then the zeros of $D_{\Gamma}$ contain the set $-\Gamma$ since for a fixed $\gamma_0 \in \Gamma$, we have
\[ \widehat{\mu}(-\gamma_0+\gamma) = \left\{ \begin{matrix} 1 & \gamma = \gamma_0 \\ 0 & \gamma \in \Gamma \setminus \{\gamma_0\} \end{matrix} \right. . \]   This gives \[ c_{\Gamma}(-\gamma_0) = \sum_{\gamma \in \Gamma} \Bigr| \widehat{\mu}(-\gamma_0 + \gamma) \Bigr|^2 = 1.\]

It follows from Lemma \ref{Lemma:AbsConv}, Proposition \ref{Prop:cGammaDerivativeBdd} and \cite{JoPe98} that each element of $-\Gamma$ is an isolated zero of $D_{\Gamma}$ when $c_{\Gamma}$ is not the constant $1$ function.  As a result, we apply Theorem 15.10 in \cite{Rud87}.   For each fixed $\gamma$, the Weierstrass factors are
\[ E_{n(\gamma)}(t) = \Bigr(1+\frac{t}{\gamma} \Bigr) \exp\Biggr(\sum_{k=1}^{n(\gamma)} \frac{1}{k} \Bigr(-\frac{t}{\gamma} \Bigr)^k \Biggr).\]  
By \cite{Rud87} Theorem 15.10, we can write the entire analytic function \[ D_{\Gamma}(t) = G_{\Gamma}(t)\prod_{\gamma \in \Gamma}\Bigr(1+\frac{t}{\gamma} \Bigr) \exp\Biggr(\sum_{k=1}^{n(\gamma)} \frac{1}{k} \Bigr(-\frac{t}{\gamma} \Bigr)^k \Biggr), \] where $G_{\Gamma}$ is an entire analytic function on $\mathbb{C}$.  This completes the result.
\end{proof}
%%%%%%%%%%%%%%%%%%%%%%%%%%%%%%%%%
\section{New spectra for  $\mu_{\frac{1}{2n}}$}\label{sec:spectra}

We will now apply the results above to explore in detail the $L^2(\mu_{\frac{1}{2n}})$-basis properties of the dilated sets $p\Gamma(\frac{1}{2n})$, where $p$ is an odd integer and  $\Gamma(\frac{1}{2n})$ is as defined in Equation (\ref{eqn:ngamma}).  Recall  \cite{JoPe98}  that for each $n \in \mathbb{N}$,  $E(\Gamma(\frac{1}{2n}))$ is an ONB for $L^2(\mu_{\frac{1}{2n}})$.   In Section \ref{subsec:3gamma8}, we show that in the special case where $\lambda=\frac18$ and $p=3$, the set $E(3\Gamma(\frac18))$ is also an ONB.  In Section \ref{subsec:pgamman}, we find a condition on $p$ and $n$ which determines whether $E(p\Gamma(\frac{1}{2n}))$ is an ONB for  $L^2(\mu_{\frac{1}{2n}})$.

\subsection{$\mu_{\frac18}$ and the set $3\Gamma(\frac18)$}\label{subsec:3gamma8}

Let $\Gamma$ be the set defined in Equation (\ref{eqn:Gamma}).  Before proving that $E(3\Gamma(\frac{1}{8}))$ is an ONB with respect to the measure $\mu_{\frac18}$, we observe that $E(3\Gamma(\frac{1}{8}))$ is an orthonormal set in $L^2(\mu_{\frac{1}{8}})$. 

\begin{lemma}\label{Lemma:3Gamma18ON}
The set $E(3\Gamma(\frac{1}{8}))$ is orthogonal with respect to the measure $\mu_{\frac{1}{8}}$.
\end{lemma}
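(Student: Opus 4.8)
The plan is to reduce everything to the zero-set criterion of Lemma \ref{lem:zero}: two exponentials $e_{\gamma_1}$ and $e_{\gamma_2}$ are orthogonal in $L^2(\mu_{1/8})$ precisely when $\gamma_1-\gamma_2 \in \mathcal{Z}_{1/8}$. So it suffices to verify that every nonzero difference of two elements of $3\Gamma(\frac18)$ lands in the zero set $\mathcal{Z}_{1/8}$ of Equation (\ref{eqn:zero}). The whole argument is therefore a dyadic-valuation bookkeeping exercise, and no analysis is needed at this stage.

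First I would make $\mathcal{Z}_{1/8}$ explicit. Substituting $\lambda=\frac18$ into (\ref{eqn:zero}) gives $\frac{1}{4\lambda^k}=\frac{8^k}{4}=2^{3k-2}$, so that $\mathcal{Z}_{1/8}=\{2^{3k-2}(2m+1):k\geq 1,\ m\in\mathbb{Z}\}$. Reading this $2$-adically, a nonzero integer $N$ lies in $\mathcal{Z}_{1/8}$ if and only if its dyadic valuation $v_2(N)$ has the form $3k-2$, i.e. $v_2(N)\equiv 1 \pmod 3$. This reformulation converts the orthogonality question into a single congruence condition on a valuation, which is the key simplification.

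Next I would compute that valuation for an arbitrary difference. Two distinct elements of $3\Gamma(\frac18)$ have the form $\gamma_1=3\sum_k a_k 8^k$ and $\gamma_2=3\sum_k a'_k 8^k$ with $a_k,a'_k\in\{0,2\}$, so $\gamma_1-\gamma_2 = 3\sum_k (a_k-a'_k)8^k$ with each $a_k-a'_k\in\{-2,0,2\}$. Letting $j$ be the least index with $a_j\neq a'_j$ and factoring out $8^j=2^{3j}$, the remaining bracket equals $(a_j-a'_j)+8(\text{integer})$, hence is congruent to $\pm 2 \pmod 8$ and thus an odd multiple of $2$; multiplying by the odd number $3$ does not change this. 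Therefore $v_2(\gamma_1-\gamma_2)=3j+1=3(j+1)-2\equiv 1\pmod 3$, so by the previous paragraph $\gamma_1-\gamma_2\in\mathcal{Z}_{1/8}$, and Lemma \ref{lem:zero} gives orthogonality.

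I do not expect any real obstacle here; the only content is the valuation computation, and the point worth isolating is that multiplication by the odd integer $3$ preserves the dyadic valuation. The differences of $\Gamma(\frac18)$ already satisfy $v_2\equiv 1\pmod 3$ (this is exactly why $E(\Gamma(\frac18))$ is orthogonal), and dilation by any odd $p$ leaves every $v_2$ untouched, so there is nothing special about $p=3$ at the level of orthogonality. The genuine difficulty is to show that $E(3\Gamma(\frac18))$ is moreover \emph{complete}, i.e. an ONB; that is the business of Theorem A and is handled later via the spectral function $c_\Gamma$, not in this lemma.
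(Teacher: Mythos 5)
Your proof is correct, and it follows the same basic reduction as the paper: pass to differences, note $\rho_1-\rho_2 = 3(\gamma_1-\gamma_2)$, and use the fact that multiplying by the odd integer $3$ cannot move you out of $\mathcal{Z}_{\frac18}$, then invoke Lemma \ref{lem:zero}. The one genuine difference is how zero-set membership of $\gamma_1-\gamma_2$ is established: the paper simply cites the result of \cite{JoPe98} that $E(\Gamma(\frac18))$ is an ONB (hence orthogonal), so differences of elements of $\Gamma(\frac18)$ automatically lie in $\mathcal{Z}_{\frac18}$, whereas you re-derive this from scratch via the $2$-adic valuation computation ($v_2(\gamma_1-\gamma_2)=3j+1\equiv 1 \pmod 3$, where $j$ is the first digit where the expansions differ). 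The paper's route is shorter and reuses known machinery; yours is self-contained and makes the underlying mechanism completely explicit --- in particular it isolates the general principle that dilation by any odd $p$ preserves dyadic valuations, hence preserves orthogonality, which the paper itself only states later (in the proof of the general theorem in Section \ref{subsec:pgamman}, where it says the orthogonality of $E(p\Gamma(\frac{1}{2n}))$ follows ``just as in Lemma \ref{Lemma:3Gamma18ON}''). Your closing observation that completeness, not orthogonality, is the real content of Theorem A matches the paper's structure exactly.
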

\begin{proof}  Recall from Equation (\ref{eqn:zero}) that the zero set of $\mu_{\frac{1}{8}}$, denoted $\mathcal{Z}_{\frac{1}{8}}$,  is the set
\begin{equation}\label{Eqn:ZeroSet18}
\mathcal{Z}_{\frac{1}{8}}
=
\Bigg\{ \frac{8^k(2\ell+1)}{4} : k\in \mathbb{N} \text{ and } \ell \in\mathbb{Z}\Biggr\}.
\end{equation}
If $\rho_1, \rho_2\in 3\Gamma(\frac{1}{8})$, then $
\widehat{\mu}_{\frac{1}{8}}(\rho_1 - \rho_2) = 0$
since we can write $\rho_1 - \rho_2 = 3(\gamma_1- \gamma_2)$, where $\gamma_1, \gamma_2\in \Gamma(\frac{1}{8})$.  However, $\gamma_1-\gamma_2\in \mathcal{Z}_{\frac{1}{8}}$ since $\Gamma(\frac{1}{8})$ represents an ONB for $L^2(\mu_{\frac{1}{8}})$.  By Equation (\ref{Eqn:ZeroSet18}), $\rho_1 - \rho_2\in \mathcal{Z}_{\frac{1}{8}}$ as well, since $3$ is odd, and therefore by Lemma \ref{lem:zero}, $e_{\rho_1}$ and $e_{\rho_2}$ are orthogonal in $L^2(\mu_{\frac18})$.
\end{proof}

\begin{lemma}\label{Lemma:BabyTransferResult}
Set
\[\Gamma = \Gamma\Bigl(\frac{1}{8}\Bigr) 
= \Biggl\{ \sum_{i=0}^{\rm{finite}} a_i\: 8^i \:\: :\:\: a_i\in \{0, 2\}\Biggr\},\]
and consider the spectral function defined by
\[ c_{3\Gamma}(t)=c_{3\Gamma (\frac{1}{8})}(t) = \sum_{\rho\in 3\Gamma} \Big| \widehat{\mu}_{\frac{1}{8}}(t + \rho)\Big|^2.\]
Then $c_{3\Gamma}$ satisfies the functional equation
\[ c_{3\Gamma}(t) 
= \cos^2\Bigl(\frac{\pi t}{4}\Bigr)
c_{3\Gamma}\Bigl(\frac{1}{8}t\Bigr) 
+ \sin^2  \Bigl(\frac{\pi t}{4}\Bigr)
c_{3\Gamma}\Bigl(\frac{1}{8}t + \frac{3}{4}\Bigr).\]
\end{lemma}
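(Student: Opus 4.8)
The plan is to exploit the self-similar (base-$8$ digit) structure of $\Gamma$ together with the scaling identity for $\widehat{\mu}_{1/8}$ in Equation~(\ref{eqn:lambdamuhat}). First I would record the recursion $\Gamma = \{0,2\} + 8\Gamma$: every $\gamma\in\Gamma$ is uniquely $a_0 + 8\gamma'$ with $a_0\in\{0,2\}$ and $\gamma'\in\Gamma$, because the digits $0,2$ admit no carrying in base $8$. Multiplying by $3$ gives the disjoint decomposition $3\Gamma = \{0,6\} + 8\,(3\Gamma)$, so that each $\rho\in 3\Gamma$ is written uniquely as $\rho = \epsilon + 8\sigma$ with $\epsilon\in\{0,6\}$ and $\sigma\in 3\Gamma$. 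This bijection is what lets me split the defining sum of $c_{3\Gamma}$.

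Next I would apply the scaling identity with $\lambda = \frac18$, namely $\widehat{\mu}_{1/8}(s) = \cos\bigl(\frac{\pi s}{4}\bigr)\,\widehat{\mu}_{1/8}\bigl(\frac{s}{8}\bigr)$, to $s = t + \epsilon + 8\sigma$. The crucial simplification is that $\frac{s}{8} = \frac{t+\epsilon}{8} + \sigma$ and $\frac{\pi s}{4} = \frac{\pi(t+\epsilon)}{4} + 2\pi\sigma$; since every $\sigma\in 3\Gamma$ is an integer (indeed a multiple of $3$), the term $2\pi\sigma$ is an integer multiple of $2\pi$, so the cosine factor collapses to $\cos\bigl(\frac{\pi(t+\epsilon)}{4}\bigr)$, independent of $\sigma$. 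Pulling this factor out of the inner sum over $\sigma$ yields
\[
\sum_{\sigma\in 3\Gamma}\bigl|\widehat{\mu}_{1/8}(t+\epsilon+8\sigma)\bigr|^2 = \cos^2\Bigl(\frac{\pi(t+\epsilon)}{4}\Bigr)\,c_{3\Gamma}\Bigl(\frac{t+\epsilon}{8}\Bigr),
\]
by the very definition of $c_{3\Gamma}$ evaluated at $\frac{t+\epsilon}{8}$.

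Finally I would sum over $\epsilon\in\{0,6\}$. The $\epsilon=0$ term is already $\cos^2\bigl(\frac{\pi t}{4}\bigr)\,c_{3\Gamma}\bigl(\frac{t}{8}\bigr)$. For $\epsilon=6$ I would use $\frac{\pi(t+6)}{4} = \frac{\pi t}{4} + \frac{3\pi}{2}$ and the shift identity $\cos\bigl(\theta + \frac{3\pi}{2}\bigr) = \sin\theta$ to get $\cos^2\bigl(\frac{\pi(t+6)}{4}\bigr) = \sin^2\bigl(\frac{\pi t}{4}\bigr)$, while $\frac{t+6}{8} = \frac{t}{8} + \frac34$; this produces exactly the second term $\sin^2\bigl(\frac{\pi t}{4}\bigr)\,c_{3\Gamma}\bigl(\frac{t}{8}+\frac34\bigr)$ of the claimed functional equation.

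The only genuine point to be careful about is the rearrangement of the series, since $c_{3\Gamma}$ is an infinite sum. This is harmless here: the summands $|\widehat{\mu}_{1/8}(t+\rho)|^2$ are nonnegative, and by Lemma~\ref{Lemma:3Gamma18ON} together with Lemma~\ref{lem:cproperties}(1) the series converges (in fact $c_{3\Gamma}\le 1$), so the regrouping according to the bijection $\rho\leftrightarrow(\epsilon,\sigma)$ is justified by nonnegative (Tonelli-type) rearrangement. Everything else reduces to the two algebraic substitutions above, so I expect no real obstacle beyond correctly tracking the trigonometric shift in the $\epsilon=6$ branch.
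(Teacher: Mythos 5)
Your proposal is correct and follows essentially the same route as the paper's own proof: the decomposition $3\Gamma=\{0,6\}+8\,(3\Gamma)$ (the paper writes it as $\{0,6\}+24\Gamma$), the scaling identity $\widehat{\mu}_{1/8}(s)=\cos\bigl(\tfrac{\pi s}{4}\bigr)\widehat{\mu}_{1/8}\bigl(\tfrac{s}{8}\bigr)$, the collapse of the cosine factor because $3\Gamma\subset\mathbb{Z}$, and the shift $\cos\bigl(\theta+\tfrac{3\pi}{2}\bigr)=\sin\theta$. Your explicit justification of the nonnegative-series regrouping is a minor addition the paper leaves implicit.
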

\begin{proof}Since $\Gamma = \{0,2\} + 8\Gamma$, we can write $3\Gamma = \{0, 6\} + 24\Gamma$.  Then we have
\begin{equation*}
\begin{split}
c_{3\Gamma}(t)
& = \sum_{\rho\in 24\Gamma} 
	\Big| \widehat{\mu}_{\frac{1}{8}}(t + \rho)\Big|^2
+ \sum_{\rho\in 6 + 24\Gamma} 
	\Big| \widehat{\mu}_{\frac{1}{8}}(t + \rho)\Big|^2\\
& = \sum_{\gamma \in \Gamma} 
	\Big| \widehat{\mu}_{\frac{1}{8}}(t + 24\gamma)\Big|^2
+ \sum_{\gamma \in \Gamma}\Big|
	\widehat{\mu}_{\frac{1}{8}}(t + 6 + 24\gamma)\Big|^2.
\end{split}
\end{equation*}
Now apply the identity $\widehat{\mu}(t) = \cos\Bigl(2 \pi \frac{1}{8} t\Bigr) \widehat{\mu}\Bigl(\frac{1}{8} t\Bigr)$:

\begin{equation*}
\begin{split}
c_{3\Gamma}(t)
& = \sum_{\gamma \in \Gamma} 
	\cos^2\Bigl(2\pi \frac{1}{8} (t + 24\gamma) \Bigr)
	\Big| \widehat{\mu}_{\frac{1}{8}}\Bigl(\frac{1}{8}(t + 24\gamma)\Bigr)\Big|^2\\
& \phantom{{=}}
+ \sum_{\gamma \in \Gamma}
	\cos^2\Bigl(2\pi \frac{1}{8} (t + 6 + 24\gamma) \Bigr)
	\Big|\widehat{\mu}_{\frac{1}{8}}
	\Bigl(\frac{1}{8}(t + 6 + 24\gamma)\Bigr)\Big|^2\\
& = \cos^2\Bigl(2\pi \frac{1}{8}t \Bigr)
	\sum_{\gamma \in \Gamma} 
	\Big| \widehat{\mu}_{\frac{1}{8}}
	\Bigl(\frac{1}{8}t + 3\gamma\Bigr)\Big|^2\\
& \phantom{{=}}
+ \cos^2\Bigl(2\pi \frac{1}{8}t + \frac{3\pi}{2}\Bigr)
\sum_{\gamma \in \Gamma}
	\Big|\widehat{\mu}_{\frac{1}{8}}
	\Bigl(\frac{1}{8}t + \frac{3}{4} + 3\gamma)\Bigr)\Big|^2.\\
\end{split}
\end{equation*}
But $\cos\Bigl(x + \frac{3\pi}{2}\Bigr) = \sin(x)$, and the sum over $\Gamma$ can be turned into a sum over $3\Gamma$.  Therefore
\begin{equation*}
c_{3\Gamma}(t) = \cos^2\Bigl(\frac{\pi}{4}t \Bigr)c_{3\Gamma}\Bigl(\frac{1}{8}t\Bigr)
+ \sin^2\Bigl(\frac{\pi}{4}t \Bigr)c_{3\Gamma}\Bigl(\frac{1}{8}t + \frac{3}{4}\Bigr).
\end{equation*}
\end{proof}

\begin{remark}\label{rmk:hadamard}Before moving further, we note Hadamard duality (as described in \cite{JoPe98}) arises in Lemma \ref{Lemma:BabyTransferResult}.  When we work with $\frac{1}{8}$, we work in the case $\frac{1}{2n}$ when $n = 4$.  Consider the set $\Gamma(\frac{1}{8})$.  Set $L = \{0,2\} = \{0, \frac{4}{2}\}$, the set of coefficients $\{a_i\}$ in Equation (\ref{eqn:Gamma}).  We can find at least two sets $B_1$ and $B_2$ such that $\{B_i, L, 8\}$ forms a Hadamard triple for $i = 1, 2$:
\begin{equation}
\label{Defn:B}
B_1 = \{ 0,2\} \textrm{ and } B_2 = \{-1, 1\}.
\end{equation}
For the set $3\Gamma(\frac{1}{8})$, $3L = \{0, 6\} = \{ 0, \frac{4\cdot 3}{2}\}$; then the sets $B_1$ and $B_2$ still have the property that $\{B_1, 3L, 8\}$ is a Hadamard triple, $i = 1,2$.  In Lemma \ref{Lemma:BabyTransferResult}, the last equation
\[ \cos^2\Bigl(\frac{\pi}{4}t \Bigr)c_{3\Gamma}\Bigl(\frac{1}{8}t\Bigr)
+ \sin^2\Bigl(\frac{\pi}{4}t \Bigr)c_{3\Gamma}\Bigl(\frac{1}{8}t + \frac{3}{4}\Bigr)\]
can be rewritten 
\begin{equation*}\label{Eqn:BabyTransferIFS}
\cos^2\Bigl(\frac{\pi}{4}t \Bigr)c_{3\Gamma}(\tau_0(t))
+ \sin^2\Bigl(\frac{\pi}{4}t \Bigr)c_{3\Gamma}(\tau_3(t)),
\end{equation*}
where 
\[ \tau_0(t) = \frac{1}{2n}t = \frac{1}{8}t\] and 
\[ \tau_p(x) = \tau_3(x) = \frac{1}{2n}t + \frac{p}{4} = \frac{1}{8}x + \frac{3}{4}.\]
Thus the IFS associated with $3L$ is $\{\tau_0, \tau_3\}.$
\end{remark}

In Lemma \ref{Lemma:Tcontractive}, we find that a specialization of the argument used in \cite[Lemma 5.1]{JoPe98} can be followed here for the case of the set $3\Gamma(\frac{1}{8})$ and the measure $\mu_{\frac{1}{8}}$.  

Define the transfer operator $T_{3L}$ to be
\begin{equation}\label{Eqn:T3L}
T_{3L}f(t) := \cos^2\Bigl(\frac{\pi}{4}t \Bigr) f\Bigl(\frac{1}{8}t\Bigr)
+ \sin^2\Bigl(\frac{\pi}{4}t \Bigr)f\Bigl(\frac{1}{8}t + \frac{3}{4}\Bigr).
\end{equation}
Then $T_{3L}$ is \textit{localized} on the interval $J:=[0, \frac{6}{7}]$---that is, if we restrict $f$ to $J$ (denoted $f|_{J}$), then
\begin{equation*}
T_{3L}(f|_{J}) = (T_{3L}f)|_{J}.
\end{equation*}
We can see this by checking that $\tau_0(J)\subset J$ and $\tau_3(J)\subset J$.
\begin{lemma}\label{Lemma:Tcontractive}
Let $\mathcal{K} = \{f: f\in C^1(J), f\geq 0, f(0) = 1\}$, and equip $\mathcal{K}$ with the norm $\max|f'|$. Define the transfer operator $T_{3L}$ as in Equation (\ref{Eqn:T3L}):
\begin{equation*}
T_{3L}f(t):=
\cos^2\Bigl(\frac{\pi}{4}t \Bigr) f\Bigl(\frac{1}{8}t\Bigr)
+ \sin^2\Bigl(\frac{\pi}{4}t \Bigr)f\Bigl(\frac{1}{8}t + \frac{3}{4}\Bigr).
\end{equation*}  
Then
$T$ is strictly contractive on $\mathcal{K}$.
\end{lemma}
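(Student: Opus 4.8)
The plan is to exploit the linearity of $T_{3L}$ to reduce the claim to a single derivative estimate. Since $T_{3L}$ is linear and maps $\mathcal{K}$ into itself---the coefficients $\cos^2$ and $\sin^2$ are nonnegative and sum to $1$, so positivity is preserved, $T_{3L}f\in C^1(J)$, and $(T_{3L}f)(0)=\cos^2(0)f(0)+\sin^2(0)f(\tfrac34)=1$---for any $f,g\in\mathcal{K}$ we have $T_{3L}f-T_{3L}g=T_{3L}(f-g)$ in the ambient space $C^1(J)$. Writing $h=f-g$, the function $h$ lies in $C^1(J)$ with $h(0)=0$, and the distance in our norm is $\|f-g\|=\max_J|h'|$. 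Hence it suffices to exhibit a constant $c<1$ with $\max_J|(T_{3L}h)'|\le c\,\max_J|h'|$ for every such $h$.

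First I would differentiate $T_{3L}h$ directly. Using the product rule, the chain rule for the inner maps $\tau_0(t)=\tfrac18 t$ and $\tau_3(t)=\tfrac18 t+\tfrac34$ (each contributing a factor $\tfrac18$), and the identity $2\sin(\tfrac{\pi}{4}t)\cos(\tfrac{\pi}{4}t)=\sin(\tfrac{\pi}{2}t)$, I obtain
\begin{equation*}
(T_{3L}h)'(t)=\frac{\pi}{4}\sin\Bigl(\frac{\pi}{2}t\Bigr)\bigl[h(\tau_3 t)-h(\tau_0 t)\bigr]+\frac{1}{8}\Bigl[\cos^2\Bigl(\frac{\pi}{4}t\Bigr)h'(\tau_0 t)+\sin^2\Bigl(\frac{\pi}{4}t\Bigr)h'(\tau_3 t)\Bigr].
\end{equation*}
The computation separates naturally into a term in which the derivatives fall on the coefficients (the ``difference'' term) and a term in which they fall on the inner functions (the ``interior'' term), and I would estimate these separately.

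For the interior term, the Pythagorean identity $\cos^2+\sin^2=1$ together with $h'(\tau_0 t),h'(\tau_3 t)\in\{h'(s):s\in J\}$ gives the bound $\tfrac18\max_J|h'|$; this is where the contraction factor $\tfrac18$ of the inner IFS maps enters. For the difference term, I would apply the mean value theorem: since $J$ is an interval and the localization property $\tau_0(J)\subset J$, $\tau_3(J)\subset J$ (already verified above) places both $\tau_0 t$ and $\tau_3 t$ in $J$, the whole segment between them lies in $J$ by convexity, so $|h(\tau_3 t)-h(\tau_0 t)|\le|\tau_3 t-\tau_0 t|\,\max_J|h'|=\tfrac34\max_J|h'|$. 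Bounding $|\sin(\tfrac{\pi}{2}t)|\le1$ and adding,
\begin{equation*}
\bigl|(T_{3L}h)'(t)\bigr|\le\Bigl(\frac{\pi}{4}\cdot\frac{3}{4}+\frac{1}{8}\Bigr)\max_J|h'|=\Bigl(\frac{3\pi}{16}+\frac{1}{8}\Bigr)\max_J|h'|.
\end{equation*}
Since $\frac{3\pi}{16}+\frac18\approx0.714<1$, taking $c=\frac{3\pi}{16}+\frac18$ establishes strict contractivity.

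I expect the only genuine obstacle to be confirming that the resulting constant is truly below $1$: the derivative of the coefficients contributes a factor of order $\pi$, and it is only the combination of the fixed gap $\tau_3 t-\tau_0 t=\tfrac34$ (controlled by the geometry on $J$) with the $\tfrac18$ scaling of the inner maps that keeps the total under $1$. A cruder estimate---for instance bounding $|h(\tau_3 t)-h(\tau_0 t)|$ by $2\max_J|h|$ rather than by the difference of the arguments---would fail, so the mean value theorem step, and hence the localization to $J$, is essential. If a sharper constant were ever needed one could replace $|\sin(\tfrac{\pi}{2}t)|\le1$ by its actual maximum $\sin(\tfrac{3\pi}{7})$ on $J$, but this refinement is not required here.
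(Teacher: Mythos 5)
Your proposal is correct and follows essentially the same route as the paper: differentiate $T_{3L}f$, bound the term where the derivative falls on the inner maps by $\tfrac18\max|f'|$, and bound the coefficient-derivative term by $\tfrac{\pi}{4}\cdot\tfrac34\max|f'|$ using the localization to $J$ (the paper writes $f(\tau_3 t)-f(\tau_0 t)$ as $\int_{t/8}^{t/8+3/4}f'(s)\,ds$ where you invoke the mean value theorem, an immaterial difference). Your explicit reduction via linearity to functions $h=f-g$ with $h(0)=0$ is a slightly cleaner way to phrase the contraction estimate, but the substance is identical.
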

\begin{proof}
Let $f\in C^1(J)$.  We calculate the derivative of $T_{3L}f(t)$ with respect to $t$:
\begin{equation*}
\begin{split}
 \frac{d}{dt}\Biggl(T_{3L}f(t) \Biggr)  & = 
\frac{\pi}{4}\sin\Bigl(\frac{\pi t}{2}\Bigr)
\Biggl(
	- f\Bigl(\frac{t}{8}\Bigr) 
	+f\Bigl(\frac{t}{8} + \frac{3}{4}\Bigr)
\Biggr)\\
& \phantom{{=}}+
\frac{1}{8}
\Biggl[
	\cos^2\Bigl(\frac{\pi t}{4}\Bigr)
	f'\Bigl(\frac{t}{8}\Bigr)
+
	\sin^2\Bigl(\frac{\pi t}{4}\Bigr)
	f'\Bigl(\frac{t}{8} + \frac{3}{4}\Bigr)
\Biggr].
\end{split}
\end{equation*}

Notice that the last term above is exactly $\frac18 (Tf')(t)$, and that $|(Tf')(t)| \leq \max|f'|$ for all $t$.    We can use this to estimate the absolute value of the derivative:

\begin{equation*}
\begin{split}
 \Bigg| 
	\frac{d}{dt}\Bigl(T_{3L}f(t) \Bigr)
\Bigg|
& \leq 
\frac{\pi}{4} 
\Bigg|
	\int_{\frac{t}{8} }^{\frac{t}{8}+ \frac{3}{4}} f'(s)\:ds
\Bigg|
+ 
\frac{1}{8}\max |f'|
\\
& \leq 
\Bigl(\frac{\pi}{4}\cdot \frac{3}{4} + \frac{1}{8}\Bigr)\max|f'|.
\end{split}
\end{equation*}
Therefore, $\max|(T_{3L}f)'| \leq \Bigl(\frac{3\pi}{16} + \frac{1}{8}\Bigr)\max|f'|$, and since $\frac{3\pi}{16} + \frac{1}{8} < 1$, $T_{3L}$ is strictly contractive.

To see that $T_{3L}$ maps $\mathcal{K}$ back into $\mathcal{K}$, we first calculate 
\[ T_{3L}f(0)  = \cos^2(0) f(0) + \sin^2(0)f\Bigl(\frac{3}{4}\Bigr) = f(0) = 1.\]
If $f\in C^1(J)$, then $T_{3L}f\in C^1(J)$ as well, since $\cos$ and $\sin$ are $C^{\infty}$ functions.
\end{proof} 

We are now ready to prove the main result. 

\begin{thm}\label{Thm:3Gamma18ONB}
The set $E(3\Gamma(\frac{1}{8}))$ is an orthonormal basis for $L^2(\mu_{\frac{1}{8}})$.
\end{thm}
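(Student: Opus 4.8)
The plan is to use the characterization in Lemma \ref{lem:cproperties}(2): the set $E(3\Gamma(\frac{1}{8}))$ is an orthonormal basis for $L^2(\mu_{\frac{1}{8}})$ precisely when its spectral function $c_{3\Gamma}$ is identically $1$. So the whole task reduces to proving $c_{3\Gamma}\equiv 1$, and I would carry this out by a fixed-point uniqueness argument resting on Lemmas \ref{Lemma:BabyTransferResult} and \ref{Lemma:Tcontractive}. The key observation is that \emph{two} natural functions satisfy the same transfer identity. First, Lemma \ref{Lemma:BabyTransferResult} says exactly that $c_{3\Gamma}$ is a fixed point of the transfer operator $T_{3L}$. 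Second, the constant function $\mathbf{1}$ is trivially a fixed point, since $\cos^2(\frac{\pi t}{4})+\sin^2(\frac{\pi t}{4})=1$ forces $T_{3L}\mathbf{1}=\mathbf{1}$. If $T_{3L}$ has a \emph{unique} fixed point on a space containing both $c_{3\Gamma}$ and $\mathbf{1}$, the two must coincide.

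Next I would verify that $c_{3\Gamma}$, restricted to $J=[0,\frac{6}{7}]$, lies in the space $\mathcal{K}$ of Lemma \ref{Lemma:Tcontractive}, so that the contraction result applies. Nonnegativity is immediate since $c_{3\Gamma}$ is a sum of squared moduli. Smoothness on $J$ follows from Theorem \ref{Thm:cGammaAnalytic}: orthonormality of $E(3\Gamma(\frac{1}{8}))$ (Lemma \ref{Lemma:3Gamma18ON} together with $\|e_\gamma\|^2=\widehat{\mu}_{\frac{1}{8}}(0)=1$) makes the synthesis operator an isometry, hence bounded, so $c_{3\Gamma}$ has an entire analytic extension and in particular is $C^1$ on $\mathbb{R}$. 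The normalization $c_{3\Gamma}(0)=1$ comes from Lemma \ref{Lemma:3Gamma18ON}: since $0\in 3\Gamma$, taking $\rho_2=0$ shows $\widehat{\mu}_{\frac{1}{8}}(\rho)=0$ for every nonzero $\rho\in 3\Gamma$, leaving only the $\rho=0$ term $|\widehat{\mu}_{\frac{1}{8}}(0)|^2=1$. The constant $\mathbf{1}$ clearly lies in $\mathcal{K}$ as well. Because $T_{3L}$ is localized on $J$ (its branch maps $\tau_0,\tau_3$ send $J$ into $J$), the fixed-point identity is meaningful within $\mathcal{K}$, which I regard as a complete metric space under $d(f,g)=\max_J|f'-g'|$: this is a genuine metric because the constraint $f(0)=1$ recovers $f$ from $f'$, and completeness holds since uniform convergence of derivatives together with $f(0)=1$ produces a $C^1$ limit still satisfying $f\geq 0$ and $f(0)=1$.

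With completeness established, the Banach fixed-point theorem and the strict contractivity from Lemma \ref{Lemma:Tcontractive} yield a unique fixed point of $T_{3L}$ in $\mathcal{K}$; hence $c_{3\Gamma}(t)=1$ for all $t\in J$. To pass from $J$ to all of $\mathbb{R}$, I would invoke analyticity once more: since $c_{3\Gamma}$ is entire by Theorem \ref{Thm:cGammaAnalytic} and agrees with the constant $1$ on the interval $J$, which has accumulation points, the identity theorem forces $c_{3\Gamma}\equiv 1$ on $\mathbb{C}$, in particular on $\mathbb{R}$. Lemma \ref{lem:cproperties}(2) then gives that $E(3\Gamma(\frac{1}{8}))$ is an orthonormal basis. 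The step I expect to demand the most care is the uniqueness engine itself---confirming that $\mathcal{K}$ under the derivative metric is complete and that $T_{3L}$ is a genuine self-map of $\mathcal{K}$ viewed as functions on $J$, so that the contraction principle applies cleanly; the subsequent passage from $J$ to $\mathbb{R}$ through the identity theorem is the one essential ingredient beyond the template of \cite{JoPe98} and must not be omitted, since the contraction alone controls $c_{3\Gamma}$ only on $J$.
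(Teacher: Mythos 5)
Your proof is correct, and it rests on the same pillars as the paper's own argument: the characterization of ONBs via $c_{3\Gamma}\equiv 1$ (Lemma \ref{lem:cproperties}), orthogonality from Lemma \ref{Lemma:3Gamma18ON}, boundedness of the synthesis operator giving regularity of $c_{3\Gamma}$ (Theorem \ref{Thm:cGammaAnalytic}, Proposition \ref{Prop:cGammaDerivativeBdd}), the transfer identity of Lemma \ref{Lemma:BabyTransferResult}, and the contractivity of Lemma \ref{Lemma:Tcontractive}. Where you diverge is in how the contraction is cashed in. The paper applies the derivative estimate \emph{directly to the fixed point itself}: since $c_{3\Gamma}=T_{3L}c_{3\Gamma}$ and $\max|c_{3\Gamma}'|$ is finite, the inequality $\max|c_{3\Gamma}'|\leq\bigl(\frac{3\pi}{16}+\frac18\bigr)\max|c_{3\Gamma}'|$ forces $c_{3\Gamma}'\equiv 0$, so $c_{3\Gamma}$ is constant and equals $c_{3\Gamma}(0)=1$; no completeness of $\mathcal{K}$ and no fixed-point theorem are needed. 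You instead run the Banach fixed-point theorem, which obliges you to verify that $(\mathcal{K}, d)$ with $d(f,g)=\max_J|f'-g'|$ is a complete metric space and that $T_{3L}$ is a self-map --- checks you carry out correctly (note that the contraction property $d(T_{3L}f,T_{3L}g)\leq k\, d(f,g)$ follows from the lemma's single-function estimate together with linearity of $T_{3L}$, a point worth stating explicitly). Your route costs more machinery, but it buys one genuine improvement in rigor: you explicitly address the passage from $J=[0,\frac67]$ to all of $\mathbb{R}$ via the identity theorem applied to the entire extension of $c_{3\Gamma}$, a step the paper's proof leaves implicit (its stated conclusion, with the max taken over $J$ as the definition of $\mathcal{K}$ demands, literally yields $c_{3\Gamma}\equiv 1$ only on $J$; one must either invoke analyticity, as you do, or observe that the derivative estimate in Lemma \ref{Lemma:Tcontractive} holds with suprema over $\mathbb{R}$ since Proposition \ref{Prop:cGammaDerivativeBdd} gives a global bound on $c_{3\Gamma}'$).
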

\begin{proof}
We argue that the function $c_{3\Gamma}$ must be identically $1$.  Lemma \ref{Lemma:3Gamma18ON} showed that the set $E(3\Gamma(\frac18))$ is orthogonal, so by Lemma \ref{lem:cproperties}, we have $c_{3\Gamma}(t) \leq 1$ for all $t$ and hence the synthesis operator defined in Equation (\ref{Eqn:Sdeltagamma}) is bounded with operator norm $1$.   By Proposition \ref{Prop:cGammaDerivativeBdd}, we therefore know that $c_{3\Gamma}$ has bounded derivative, and furthermore $c_{3\Gamma}\in\mathcal{K}$.   

By Lemma \ref{Lemma:BabyTransferResult}, $c_{3\Gamma}$ is an eigenfunction of the transfer operator $T_{3L}$ with eigenvalue $1$.  Since $T_{3L}$ is strictly contractive with respect to the norm $\max|f'|$, we have
\[ \max |(c_{3\Gamma})'| = \max|(T_{3L}c_{3\Gamma})'| \leq  \Bigl(\frac{3\pi}{16} + \frac{1}{8}\Bigr)\max |(c_{3\Gamma})'|,\]
which implies that $c_{3\Gamma}$ must be constant.  Since $c_{3\Gamma}(0) = 1$, we must have $c_{3\Gamma
} \equiv 1$.
\end{proof}

\subsection{Families of ONBs for $L^2(\mu_{\frac{1}{2n}})$}\label{subsec:pgamman}
Now that we have proved that $E(3\Gamma(\frac{1}{8}))$ is an ONB for $L^2(\mu_{\frac{1}{8}}) = L^2(\mu_{\frac{1}{2\cdot 4}})$, we extend this result to Bernoulli measures $\mu_{\frac{1}{2n}}$ for  spectra of the form $p\Gamma(\frac{1}{2n})$ where $p\in 2\mathbb{N}+1$ and $n\in\mathbb{N}$.  Our result depends on the contractivity constant as in Theorem \ref{Thm:3Gamma18ONB}.  

Recall the general spectrum
\begin{equation*}
\Gamma\Bigl(\frac{1}{2n}\Bigr) 
= \Biggl\{ \sum_{i=0}^{\rm{finite}} a_i\: (2n)^i 
\:\: :\:\: a_i\in \Bigl\{0, \frac{n}{2}\Bigr\}\Biggr\}.
\end{equation*} 
Our generalized theorem is the following:
\begin{thm}
If $p \in 2\mathbb{N}+1$ such that $p < \frac{2(2n-1)}{\pi}$, then $E(p\Gamma(\frac{1}{2n}))$ is an ONB for $L^2(\mu_{\frac{1}{2n}})$.
\end{thm}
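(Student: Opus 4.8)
The plan is to follow the three-lemma template established for $\lambda=\frac18$, $p=3$ in Lemmas \ref{Lemma:3Gamma18ON}--\ref{Lemma:Tcontractive} and Theorem \ref{Thm:3Gamma18ONB}, carrying each step through with $n$ and $p$ left general. First I would record orthogonality of $E(p\Gamma(\frac{1}{2n}))$ exactly as in Lemma \ref{Lemma:3Gamma18ON}: for distinct $\rho_1,\rho_2\in p\Gamma$ write $\rho_1-\rho_2=p(\gamma_1-\gamma_2)$ with $\gamma_1,\gamma_2\in\Gamma(\frac{1}{2n})$; since $E(\Gamma(\frac{1}{2n}))$ is an ONB, $\gamma_1-\gamma_2\in\mathcal{Z}_{\frac{1}{2n}}=\{\frac{(2\ell+1)(2n)^k}{4}\}$, and because $p$ is odd, $p(2\ell+1)$ is still odd, so $p(\gamma_1-\gamma_2)$ remains in $\mathcal{Z}_{\frac{1}{2n}}$. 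By Lemma \ref{lem:zero} the family is orthogonal; hence $c_{p\Gamma}\leq 1$ by Lemma \ref{lem:cproperties}, the synthesis operator $S$ is bounded, and Proposition \ref{Prop:cGammaDerivativeBdd} applies so that $c_{p\Gamma}$ has bounded derivative on $\mathbb{R}$.

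Next I would derive the functional equation generalizing Lemma \ref{Lemma:BabyTransferResult}. Using $\Gamma(\frac{1}{2n})=\{0,\frac{n}{2}\}+2n\,\Gamma(\frac{1}{2n})$ one gets $p\Gamma=\{0,\frac{pn}{2}\}+2np\,\Gamma$, splits $c_{p\Gamma}$ into two sums, and applies the scaling identity $\widehat{\mu}(s)=\cos(\frac{\pi s}{n})\widehat{\mu}(\frac{s}{2n})$ from (\ref{eqn:lambdamuhat}) with $\lambda=\frac{1}{2n}$. The two parity facts that make the identity collapse are that $2p\gamma\in\mathbb{Z}$ for every $\gamma\in\Gamma$, so the $\gamma$-dependence inside $\cos^2(\frac{\pi}{n}(t+2np\gamma))=\cos^2(\frac{\pi t}{n}+2\pi p\gamma)$ disappears ($\cos^2$ having period $\pi$ and $2p\gamma$ being an integer), and that $p$ odd gives $\cos^2(\frac{\pi t}{n}+\frac{p\pi}{2})=\sin^2(\frac{\pi t}{n})$. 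The outcome is
\[ c_{p\Gamma}(t)=\cos^2\Bigl(\tfrac{\pi t}{n}\Bigr)\,c_{p\Gamma}\Bigl(\tfrac{t}{2n}\Bigr)+\sin^2\Bigl(\tfrac{\pi t}{n}\Bigr)\,c_{p\Gamma}\Bigl(\tfrac{t}{2n}+\tfrac{p}{4}\Bigr), \]
which specializes to Lemma \ref{Lemma:BabyTransferResult} at $n=4$, $p=3$.

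I would then define $T_{pL}f(t)=\cos^2(\frac{\pi t}{n})f(\frac{t}{2n})+\sin^2(\frac{\pi t}{n})f(\frac{t}{2n}+\frac{p}{4})$, with branch maps $\tau_0(t)=\frac{t}{2n}$ and $\tau_p(t)=\frac{t}{2n}+\frac{p}{4}$, and check that the interval $J=[0,\frac{pn}{2(2n-1)}]$ is invariant under both (this recovers $[0,\frac67]$ when $n=4$, $p=3$, since $\tau_p$ maps the right endpoint to itself), so that $T_{pL}$ is localized on $J$. Differentiating as in Lemma \ref{Lemma:Tcontractive} yields
\[ \frac{d}{dt}\bigl(T_{pL}f\bigr)(t)=\frac{\pi}{n}\sin\Bigl(\tfrac{2\pi t}{n}\Bigr)\Bigl[f(\tau_p t)-f(\tau_0 t)\Bigr]+\frac{1}{2n}\bigl(T_{pL}f'\bigr)(t), \]
and estimating $|f(\tau_p t)-f(\tau_0 t)|\leq\frac{p}{4}\max|f'|$ together with $|(T_{pL}f')(t)|\leq\max|f'|$ gives the contraction constant $\frac{\pi p}{4n}+\frac{1}{2n}$. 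This is strictly less than $1$ precisely when $\pi p+2<4n$, i.e. $p<\frac{2(2n-1)}{\pi}$---exactly the hypothesis. Finally, since $c_{p\Gamma}\in\mathcal{K}$ is a fixed point of this strict contraction in the seminorm $\max|f'|$, its derivative vanishes on $J$; by the analyticity of $c_{p\Gamma}$ from Theorem \ref{Thm:cGammaAnalytic} it is then constant on all of $\mathbb{R}$, and $c_{p\Gamma}(0)=1$ forces $c_{p\Gamma}\equiv 1$, so $E(p\Gamma(\frac{1}{2n}))$ is an ONB by Lemma \ref{lem:cproperties}(2).

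The step I expect to demand the most care is the functional-equation derivation: the whole theorem hinges on the two parity cancellations holding for every $n$ (including odd $n$, where the generator $\frac{n}{2}$ and hence individual $\gamma$ may be half-integers), so I would verify at the outset that $2p\gamma\in\mathbb{Z}$ for all $\gamma\in\Gamma(\frac{1}{2n})$ and track the phase $\frac{p\pi}{2}$ carefully. The sharp contraction constant in the last step must then be matched exactly to the stated bound on $p$, which is what turns the contractivity threshold into the clean inequality $p<\frac{2(2n-1)}{\pi}$.
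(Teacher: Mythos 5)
Your proposal is correct and follows essentially the same route as the paper's own proof: orthogonality from the oddness of $p$, the functional equation via the decomposition $p\Gamma(\frac{1}{2n}) = 2np\Gamma(\frac{1}{2n}) \cup (\frac{np}{2} + 2np\Gamma(\frac{1}{2n}))$, and strict contractivity of $T_{pL}$ in the seminorm $\max|f'|$ with constant $\frac{\pi p}{4n} + \frac{1}{2n}$, which is less than $1$ exactly when $p < \frac{2(2n-1)}{\pi}$. In fact you supply details the paper leaves implicit (the invariant interval $J = [0, \frac{pn}{2(2n-1)}]$, the verification that $2p\gamma \in \mathbb{Z}$ even for odd $n$, and the analytic continuation from constancy on $J$ to constancy on $\mathbb{R}$), all of which are consistent with the paper's argument.
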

\begin{proof}
Fix $n\in\mathbb{N}$ and let $p\in 2\mathbb{N}+1$.  Set $\mu = \mu_{\frac{1}{2n}}$.  Because $p$ is odd, the family $E(p\Gamma(\frac{1}{2n}))$ is orthogonal in $L^2(\mu)$, just as in Lemma \ref{Lemma:3Gamma18ON}.  Define the spectral function $c$ by
\[ c(t) = c_{p\Gamma(\frac{1}{2n})}(t):=\sum_{\nu\in p\Gamma(\frac{1}{2n})}|\widehat{\mu}(t+\nu)|^2.\]
The structure given in Equation (\ref{Eqn:Gamma12n}) gives \[ p\Gamma\Bigl(\frac{1}{2n}\Bigr) = 2np\Gamma\Bigl(\frac{1}{2n}\Bigr) \cup \Bigl(\frac{np}{2} + 2np\Gamma\Bigl(\frac{1}{2n}\Bigr)\Bigr).\]
Therefore, we can pull apart the sum just as we did in Lemma \ref{Lemma:BabyTransferResult} to find that 
\[ c(t) = \cos^2\Bigl(\frac{\pi t}{n}\Bigr) c\Bigl(\frac{t}{2n}\Bigr) + \sin^2\Bigl(\frac{\pi t}{n}\Bigr)c\Bigl(\frac{t}{2n} + \frac{p}{4}\Bigr).\]
The only difference in the calculation is that $\cos\Bigl(t + \frac{p\pi}{2}\Bigr) = -\sin(t)$ when $p \equiv 1$ mod $4$ and $\cos\Bigl(t + \frac{p\pi}{2}\Bigr) = \sin(t)$ when $p\equiv 3$ mod $4$.

Therefore, there is a transfer operator $T_{pL}$ which has $c$ as an eigenfunction:
\begin{equation}\label{Eqn:TransferpL}
\begin{split}
T_{pL}f(t) 
& = \cos^2\Bigl(\frac{\pi t}{n}\Bigr) f\Bigl(\frac{t}{2n}\Bigr) + \sin^2\Bigl(\frac{\pi t}{n}\Bigr) f\Bigl(\frac{t}{2n} + \frac{p}{4}\Bigr)\\
& = \cos^2\Bigl(\frac{\pi t}{n}\Bigr) f(\tau_0(t)) + \sin^2\Bigl(\frac{\pi t}{n}\Bigr) f(\tau_p(t)).
\end{split}
\end{equation}
Following the derivative calculations in Lemma \ref{Lemma:Tcontractive}, we have 
\begin{eqnarray*} (T_{pL}f)'(t) &=&  \frac{\pi}{n}\sin\Bigl(\frac{2\pi t}{n}\Bigr)\Bigl[f\Bigl(\frac{t}{2n}+\frac{p}{4}\Bigr) - f\Bigl(\frac{t}{2n}\Bigr)\Bigr] + \frac{1}{2n} (T_{pL}f')(t)\\ &=&  \frac{\pi}{n}\sin\Bigl(\frac{2\pi t}{n}\Bigr)\int_{\frac{t}{2n}}^{\frac{t}{2n} + \frac{p}{4}} f'(s) \,\textrm{d}s + \frac{1}{2n} (T_{pL}f')(t) \end{eqnarray*}

Using the same computations, we have an estimate on the derivative of $T_{pL}f$:
\[ \Big| (T_{pL}f)'(t)\Big| \leq \frac{\pi}{n}\frac{p}{4} \max|f'| + \frac{1}{2n} \max|f'|. \]

Thus, we find that $T_{pL}$ is strictly contractive when 
\[ \frac{\pi}{n}\cdot \frac{p}{4} + \frac{1}{2n} < 1,\]
or 
\begin{equation}\label{eqn:pn} p < \frac{2(2n-1)}{\pi}.\end{equation}
\end{proof}

For example, when $n=2$ and $n = 3$, only $p=1$ satisfies the inequality above.  When $n = 4$, $p = 1$ \cite{JoPe98} and $p=3$ (Lemma \ref{Lemma:Tcontractive}) work, but $T_{5L}$ does not have a contractivity constant less than $1$.  When $n = 5$, $p = 1$ \cite{JoPe98}, $p=3$, and $p=5$ satisfy the inequality, and for $n=6$ we have $p=1, 3, 5, $ and $7$ all yielding different spectral pairs $\bigl(\mu_{\frac{1}{12}}, p\Gamma(\frac{1}{12})\bigr)$. 

\begin{remark}\label{rmk:examples}We should emphasize here that the lists of orthogonal exponentials $E(S)$ we have described in this section for the various Bernoulli measures are not exhaustive.  We have only considered those of the form $p\Gamma(\frac{1}{2n})$ where $p$ is odd and satisfies the condition in Equation (\ref{eqn:pn}).   There are a variety of sets of exponentials $E(S)$  known to be ONBs with respect to a particular fractal measure, but for which
there is no known $C^1$-contractive transfer operator that can account for it.  For example, in the case where $n=2$ and  $p=5$, it is known \cite{DJ09} that $5\Gamma(\frac{1}{4})$ is a spectrum for $L^2(\mu_{\frac{1}{4}})$ even though $p=5$ is certainly larger than $\frac{6}{\pi}$.  Further, this example gives an ONB for all spectra of the form $5^k\Gamma(\frac{1}{4}), k \in \mathbb{N}$, so the odd number $p$ can get arbitrarily large and still yield an ONB.

On the other hand, there are examples for which $p$ is larger than the inequality in (\ref{eqn:pn}) and for which the sets $E(p\Gamma(\frac{1}{2n}))$ are orthogonal but not complete, i.e. their span is not dense in $L^2(\mu_{\frac{1}{2n}})$.  One such example is for given $n \in \mathbb{N}$,  let $p=2n-1$.  Then $E(p\Gamma(\frac{1}{2n}))$ is not complete in $L^2(\mu_{\frac{1}{2n}})$, and in fact, the exponential $e_{-\frac{n}{2}}$ is in the orthogonal complement of the span of $E(p\Gamma(\frac{1}{2n}))$.  To see this, we take the inner product of $e_{p\gamma}$ for $\gamma \in \Gamma(\frac{1}{2n})$ and $e_{-\frac{n}{2}}$:
\[ \langle e_{-\frac{n}{2}}, e_{p\gamma} \rangle = \widehat{\mu}_{\frac{1}{2n}}\Big(\frac{n}{2} + (2n-1)\gamma\Big).\]  If $\gamma = \sum_{i=0}^p a_i (2n)^i$, recalling that $a_i \in \{0, \frac{n}{2}\}$, then we see that \[ \frac{n}{2} + 2n\gamma-\gamma = \frac{n}{2} + \sum_{i=0}^p a_i (2n)^{i+1} -\sum_{i=0}^p a_i (2n)^i\]  Then $\gamma' = \frac{n}{2} + \sum_{i=0}^p a_i (2n)^{i+1}$ is another element of $\Gamma(\frac{1}{2n})$.  Since the difference of any two elements in $\Gamma(\frac{1}{2n})$ is in the zero set of $\widehat{\mu}_{\frac{1}{2n}}$, the inner product of the exponentials is indeed zero.

In other cases where the odd integer $p$ is greater than  $\frac{2(2n-1)}{\pi}$, it is not known whether $p\Gamma$ is a spectrum.  For example, numerical approximation of the spectral function $c_{5\Gamma}$ when $n=4$ suggests that $5\Gamma(\frac{1}{8})$ is not a spectrum for $\mu_{\frac{1}{8}}$, but a proof of this fact is not known.
\end{remark}

\begin{remark}These cases where the spectrum in one spectral pair $(\mu, \Gamma)$ can be dilated by an odd integer $p$ to yield a new spectral pair $(\mu, p\Gamma)$ are surprising.  This phenomenon cannot occur in classical Fourier analysis, for example, where 
  the spectral pair consists of Lebesgue measure $\nu$ on a period interval and the integers $\mathbb{Z}$.  \end{remark}
  
\section{Maximal orthogonal families which are not ONBs}\label{Sec:MaximalNotONB}

In this section, we discuss two additional results which are motivated by the main part of this paper.  They both apply to measures $\mu_{\lambda}$ for the case where $\lambda = \frac{q}{2n}$, with $q$ odd and not equal to $1$, i.e. the cases where $\lambda$ is not the reciprocal of an integer.  There are no known examples for such $\lambda$ for which $\mu_{\lambda}$ is spectral.    We concentrate in particular on the case where $\lambda = \frac{3}{8}$. 

First, we explore the maximality of an orthogonal collection of exponentials $E(\Gamma(\frac{1}{8}))$ which may not be an ONB for $L^2(\mu_{\frac{3}{8}})$.  Next, we develop a chain of orthogonal families connected by a transfer operator, and find that if any one family in the chain is an ONB, then they all are ONBs.  We found this to be an interesting structure, although we have not yet found an example where the structure actually produces ONBs for a particular measure.
 
\subsection{Maximality results}\label{Subsec:Maximality}

In this first case, let $\lambda = \frac38$ and \[\Gamma = \Gamma\Bigl(\frac18\Bigr) = \Bigl\{ \sum_{i=0}^{\text{finite}} a_i8^i\,:\, a_i \in \{0,2\} \Bigr\} = \{ 0, 2, 16, 18, 128, \dots\}. \]  We have already shown that the set $E(\Gamma)$ is an orthonormal basis with respect to the measure $\mu_{\frac18}$.  We can also show that it is orthogonal with respect to $\mu_{\frac38}$.  To see this, recall from Equation (\ref{eqn:zero}) that zero set for $\widehat{\mu}_{\frac38}$ is given by
 \begin{equation}\label{eqn:8-form} \mathcal{Z}_{\frac{3}{8}} = \Bigr\{ \frac{(2\ell+1) 8^k}{4 \cdot 3^k}\,:\, \ell \in \mathbb{Z}, k \geq 1 \Bigr\}.\end{equation} 
Given $\gamma, \gamma' \in \Gamma$, we know $\gamma-\gamma'$ in $\mathcal{Z}_{\frac18}$ (see Equation (\ref{Eqn:ZeroSet18})).  We use the corresponding value of $k$ to write $\gamma-\gamma'$ in the form for $\mathcal{Z}_{\frac{3}{8}}$.    Specifically, we have \[ \gamma-\gamma' = \frac{ 8^k(2\ell +1)}{4} = \frac{8^k\cdot 3^k(2\ell+1)}{4 \cdot 3^k}.\] Since $3^k(2\ell+1)$ is also odd, we have $\gamma - \gamma' \in \mathcal{Z}_{\frac{3}{8}}$.

It is suspected, but not known, that the set $E(\Gamma)$ is not an ONB for $\mu_{\frac38}$.  One might hope to use Theorem 3.5 in\cite{DHSW09} to argue that the Hausdorff dimension of the attractor set $X_{\frac{3}{8}}$ will differ from that of $X_{\frac18}$, and therefore cannot equal the Beurling dimension of $\Gamma(\frac18)$.   This would require verification of the technical condition given in Equation (3.3) of \cite{DHSW09}, which we cannot do in this example.  We can, however, show that $E(\Gamma)$ is maximally orthogonal in $L^2(\mu_{\frac38})$, i.e. there is no exponential function orthogonal to every element of $E(\Gamma)$. 
  
\begin{thm}\label{Thm:38Maximality} Given any $t \in \mathbb{R} \setminus \Gamma$, there exists some $\gamma \in \Gamma$ such that $\langle e_t, e_{\gamma} \rangle \neq 0$.  
 \end{thm}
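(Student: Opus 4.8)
The plan is to prove the contrapositive: if $e_t$ is orthogonal to every $e_\gamma$ with $\gamma\in\Gamma$, then $t\in\Gamma$. By Lemma \ref{lem:zero} this hypothesis says precisely that $t-\gamma\in\mathcal{Z}_{\frac38}$ for every $\gamma\in\Gamma$, where $\mathcal{Z}_{\frac38}$ is the set in Equation (\ref{eqn:8-form}). The first step is to feed in the simplest elements of $\Gamma$. Taking $\gamma=0$ gives $t\in\mathcal{Z}_{\frac38}$, so $t=\frac{(2\ell+1)8^{k}}{4\cdot 3^{k}}=\frac{(2\ell+1)2^{3k-2}}{3^{k}}$ for some $k\ge 1$ and $\ell\in\mathbb{Z}$. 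In particular $t$ is rational, its reduced denominator is a power of $3$, and (since $3k-2\ge 1$) it is a $2$-adic integer. I would then analyze $t$ through its $2$-adic expansion, grouped into base-$8$ digits, and I write $v_2,v_3$ for the $2$-adic and $3$-adic valuations.

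The key translation is that every element of $\mathcal{Z}_{\frac38}$ has $2$-adic valuation exactly $3k-2$, hence $v_2\equiv 1\pmod 3$. Thus the hypothesis forces $v_2(t-\gamma)\equiv 1\pmod 3$ for all $\gamma\in\Gamma$, and I would extract the base-$8$ digits of $t$ from this single congruence, exploiting the self-similar identity $\Gamma=\{0,2\}+8\Gamma$. Writing $t=d_0+8t'$ with $d_0\in\{0,2,4,6\}$ (forced even because $v_2(t)\ge 1$), the condition at $\gamma=0$ rules out $d_0=4$ and all odd values (which give $v_2(t)=2$ or $v_2(t)=0$), while the condition at $\gamma=2$ rules out $d_0=6$ (which gives $v_2(t-2)=2\not\equiv 1$). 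Hence $d_0\in\{0,2\}$, and subtracting $d_0$ and dividing by $8$ shows that $t'$ satisfies the same hypothesis against all of $\Gamma$ (applying the original condition to $\gamma=d_0+8\gamma'\in\Gamma$). Iterating, every base-$8$ digit of $t$ lies in $\{0,2\}$.

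It then remains to promote this digit information to membership in $\Gamma$, which is a genuine constraint because a priori the $2$-adic expansion of $t$ could fail to terminate (for instance $-\tfrac27$ has all base-$8$ digits equal to $2$). This is the step I expect to be the main obstacle, and here I would combine the digit structure with the denominator bound from the first step. Writing the reduced denominator as $3^{j}$, note that $v_3(t)=-j$ when $j\ge 1$, so that $t$ and $t-2$, both lying in $\mathcal{Z}_{\frac38}$, have $2$-adic valuations each at least $3j-2$; comparing with $v_2(t-(t-2))=v_2(2)=1$ forces $3j-2\le 1$, i.e. $j\le 1$. A short computation rules out $j=1$: a rational with denominator $3$ has eventual period $\mathrm{ord}_3(8)=2$ in base $8$, and for $t/2$ (whose base-$8$ digits lie in $\{0,1\}$) a repeating block of two such digits has value $C\in\{1,8,9\}$, giving reduced denominator $63/\gcd(C,63)\in\{7,63\}$, never $3$. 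Therefore $j=0$, so $t$ is an integer; being a $2$-adic integer with all base-$8$ digits in $\{0,2\}$ it cannot be negative (a negative integer has digits eventually equal to $7$), so its expansion terminates and $t=\sum_{\mathrm{finite}}d_i8^i$ with $d_i\in\{0,2\}$, that is, $t\in\Gamma$. This completes the contrapositive and hence the theorem.
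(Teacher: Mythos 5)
Your proposal is correct, and it takes a genuinely different route from the paper's proof. The paper argues directly: after reducing to the case $t \in \mathcal{Z}_{\frac{3}{8}}$ via $\gamma=0$, it writes $t = 2^d(\mathrm{odd})/3^p$ in lowest terms, splits into the cases $p=0$, $p=1$, $p>1$, locates the first base-$8$ digit of the odd part lying outside the allowed set, and from that digit constructs an explicit witness ($\gamma_1$, $\gamma_2$, or $\gamma_3$, or simply $\gamma=2$ when $p>1$) whose difference with $t$ either has $2$-adic valuation $\not\equiv 1 \pmod 3$ or violates the coupling, built into $\mathcal{Z}_{\frac{3}{8}}$, between the power of $2$ and the power of $3$ in the denominator. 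You instead run the contrapositive as an induction over all digits: the self-similarity $\Gamma = \{0,2\}+8\Gamma$ together with the single congruence $v_2 \equiv 1 \pmod 3$ forces every $2$-adic base-$8$ digit of $t$ into $\{0,2\}$, and then---this is the step with no direct counterpart in the paper---you must exclude non-terminating expansions such as $-\frac{2}{7}$. Your ultrametric bound $3j-2 \le v_2(2)=1$ is the paper's Case 3 in disguise (both hinge on $\gamma=2$ and the valuation--denominator coupling), while your periodicity computation with repeating blocks $C \in \{1,8,9\}$ replaces the paper's Case 2 digit analysis. What your route buys: a uniform treatment of signs and of infinite expansions (the paper's finite positive base-$8$ expansions of the odd part handle negative $t$ only tacitly), and a clean characterization statement, namely that orthogonality to all of $E(\Gamma)$ forces $t \in \Gamma$. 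What the paper's route buys: constructivity---for each $t \notin \Gamma$ it produces a specific witness $\gamma$ from a list of at most three candidates read off from the digits of $t$, whereas your argument invokes infinitely many elements of $\Gamma$ and a limiting structure. Both proofs ultimately rest on the same arithmetic core: every element of $\mathcal{Z}_{\frac{3}{8}}$ has $v_2 = 3k-2$ with $k$ at least the power of $3$ in its reduced denominator.
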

\begin{proof}  
  We will show that $\langle e_t, e_{\gamma} \rangle \neq 0$ by showing that $\widehat{\mu}_{\frac38}(t - \gamma) \neq 0$, i.e. that $t - \gamma \notin \mathcal{Z}_{\frac{3}{8}}$.   First, observe that for any $t \notin \mathcal{Z}_{\frac{3}{8}}$, we can take $\gamma = 0$.  Therefore, we can restrict our attention to elements of $\mathcal{Z}_{\frac{3}{8}}$:
  We can also write elements of $\mathcal{Z}_{\frac{3}{8}}$ in the most reduced form as  \begin{equation}\label{eqn:2-form} \Bigr\{ \frac{(2m+1) 2^{3k-2}}{3^p}\Bigr\}, \end{equation} where $n \in \mathbb{Z}$, $k \geq 1$, and either $p=0$ or $p >0$ and $2n+1$ is not divisible by $3$.
  
Since we are interested in values $t$ which are not in $\Gamma$, we need to understand how the elements of $\Gamma$ look as elements of $\mathcal{Z}_{\frac{3}{8}}$.  Since elements of $\Gamma$ are all even integers, we know that $p=0$.  Moreover, we show that for $\gamma = \sum_{i=0}^n a_i8^i$,  if $i'$ is the first index such that $a_i \neq 0$, we take $k = i'+1$.  

\begin{eqnarray*} \gamma =  \sum_{i=i'}^n a_i8^i &=& 8^{i'}(2 +  \sum_{i=k}^n a_i8^{i-i'}) \\ &=&  \frac14 \cdot \frac{8^{i'}}{3^{i'}} \cdot 4(2+ \sum_{i=k}^n a_i8^{i-i'}) \cdot 3^{i'} \\  &=&  \frac14 \cdot \frac{8^{k}}{3^{k}} \cdot (1+ \sum_{i=k}^n \frac{a_i}{2}8^{i-i'}) \cdot 3^{k} = 2^{3k-2}(1+ \sum_{i=k}^n \frac{a_i}{2}8^{i-i'})
\end{eqnarray*}
  
Observe that the quantity in parentheses in the last $3$ lines above is an odd integer.  In fact, we know that when $\gamma \in \Gamma$ is written in the $\mathcal{Z}_{\frac{3}{8}}$ form, the odd integer in the numerator is of the form $3^k(1 + b_18 + b_2 8^2 + \cdots + b_q8^q)$ with $b_i \in \{0,1\}$ and where again, $k=i'+1$.

\noi (\textbf{Case 1: $p=0$)} \;  First, consider $t$ in the form of Equation (\ref{eqn:2-form}) for which $p=0$, i.e. $t$ is an even integer.  Next, we factor powers of $2$  and write the remaining odd factor in its base-8 expansion:   $t = 2^d(b_0 + \sum_{i=1}^m  b_i8^i)$, where $b_0$ is $1, 3, 5$, or $7$ and $b_i \in \{0, 1, \ldots, 7\}$ for each $i=1, 2, \ldots, n$.  Since $t \in \mathcal{Z}_{\frac{3}{8}}$, we know that the power $d$ must be of the form $3\ell-2$, i.e. $d \equiv 1 (\bmod \,3)$.    Let $i'$ be the smallest index for which $b_i \notin \{0,1\}$.  Using the values $d, i'$, we construct the elements of $\Gamma$: \[ \gamma_1 = 2^d, \quad  \gamma_2 =  2^d\sum_{i=0}^{i'-1} b_i8^i, \quad \gamma_3 = 2^d\sum_{i=0}^{i'} b_i8^i, \] where for $\gamma_2$ and $\gamma_3$ we require $i'>0$ and, thereby, $b_0=1$.   

If $i' = 0$, then \[t-\gamma_1 = 2^d \Bigr[ (b_0-1) + \sum_{i=1}^n b_i  8^i \Bigr] \] where $b_0 = 3, 5, $ or $7$.  The quantity in square brackets is even, perhaps divisible by $4$, but not divisible by $8$.  Therefore, when $t-\gamma_1$ is written in the form of Equation (\ref{eqn:2-form}), the power of $2$ is not congruent to $1$ modulo $3$ and hence $t-\gamma$ is not in $\mathcal{Z}_{\frac{3}{8}}$.  

If $b_0 =1$, i.e. $i'>0$, then if $b_{i'}$ is odd, consider the difference $t-\gamma_3$:
\[ t-\gamma_3 = 2^d 8^{i'}\Bigr[(b_{i'} - 1) + \sum_{i=i'+1}^n b_i 8^{i-i'}\Bigr] = 2^{3\ell-2}\Bigr[(b_{i'} - 1) + \sum_{i=i'+1}^n b_i 8^{i-i'}\Bigr] .\]  Since $b_{i'}$ is odd, the quantity in square brackets is even, possibly divisible by $4$ but not divisible by $8$.  If all the powers of $2$ are factored out, the new power of $2$ is not congruent to $1$ modulo $3$, and thus $t-\gamma_3$ is not in $\mathcal{Z}_{\frac{3}{8}}$.  

If $b_{i'}$ is even, look at the difference $t-\gamma_2$.  We have

\[ t-\gamma_2 = 2^d\Bigr[ b_i' + \sum_{i=i'+1}^{n} b_i8^{i'}\Bigr]. \]  Since $b_{i'}$ is even (is 2, 4, or 6), we can factor out 1 or 2 powers of 2, but not 3, and again have the wrong power of $2$, which proves $t-\gamma_2 \notin \mathcal{Z}_{\frac{3}{8}}$.  

\noi \textbf{(Case 2: $p=1$) }\;  This case works very similarly to Case 1, since $t$ is of the form \[ t = \frac{2^d(2m+1)}{3}\] where $3$ does not divide $2n+1$ and $d \equiv 1 (\bmod 3)$.    Writing the odd factor $2m+1$ in the numerator in its base-8 expansion gives: 

$t = \frac{2^d}{3}(b_0 + \sum_{i=1}^n b_i8^i)$ where $b_0$ is $1, 3, 5$, or $7$ and $b_i \in \{0, 1, \ldots, 7\}$ for each $i=1, 2, \ldots, p$.    This time, let $i'$ be the smallest index for which $b_i \notin \{0,3\}$.  Then, take the elements of $\Gamma$: \[ \gamma_1 = 2^d, \quad  \gamma_2 =  2^d\sum_{i=0}^{i'-1} \frac{b_i}{3}8^i, \quad \gamma_3 = 2^d\sum_{i=0}^{i'} \frac{b_i}{3}8^i \quad (b_0 = 3). \]

If $i' = 0$, then \[t-\gamma_1 = \frac{2^d}{3} \Bigr[(b_0-3) + \sum_{i=1}^n b_i 8^i\Bigr],\] where $b_0 = 1, 5,$ or $7$.   As before, the quantity within the square brackets is even and may be divisible by 4, but not by 8, which means the power of $2$ when $t-\gamma_1$ is written in Equation (\ref{eqn:2-form}) form will be wrong and hence $t-\gamma_1 \notin \mathcal{Z}_{\frac{3}{8}}$.

If $i'>0$ and $b_{i'}$ is odd (1, 5, or 7), then 

\[ t-\gamma_3 = \frac{2^d}{3} 8^{i'}\Bigr[(b_{i'} - 3) \sum_{i=i'+1}^n b_i 8^{i-i'}\Bigr] =  \frac{2^{3\ell-2}}{3}\Bigr[(b_{i'} - 3) \sum_{i=i'+1}^p b_i 8^{i-i'}\Bigr] .\]  

If $i'>0$ and $b_{i'}$ is even (2, 4, or 6), then 

\[ t-\gamma_2 = \frac{2^d}{3}\Bigr[ b_i' + \sum_{i=i'+1}^n b_i8^{i'}\Bigr]. \]

In both instances, the quantity in square brackets is divisible by one or two powers of $2$, but not three.  Thus, when all of the powers of $2$ are factored out, the resulting power of $2$ is not congruent to $1 (\bmod \,3)$, so the differences are not in $\mathcal{Z}_{\frac{3}{8}}$.

\noi \textbf{(Case 3: $p>1$)}\;  If $p>1$, we take $\gamma = 2$.   Recall that $d \geq 3p-2 > 1$.  We have  
\[ t-\gamma =  \frac{2^d [1+ \sum_{i=1}^{n} b_i8^{i}] - 2 \cdot 3^p}{3^p}. \]  Since the odd integer $1+ \sum_{i=1}^{n} b_i8^{i}$ is not divisible by 3, we see that the numerator is not divisible by $3$ or by $8$.  When we put this into the reduced form of Equation (\ref{eqn:2-form}), we get \[ t-\gamma = \frac{2[2^{d-1}(1+\sum_{i=1}^{n} b_i8^{i})-3^p]}{3^p}.\]  The numerator is divisible by 2, but no higher powers of 2, which proves that $t-\gamma \notin \mathcal{Z}_{\frac{3}{8}}$.  
\end{proof}

\subsection{The transfer operators for $\lambda=\frac{3}{8}$}\label{Subsec:TransferT}
 
We have observed before the structure of the set $\Gamma(\frac{1}{8})$:
\begin{equation}\label{Eqn:StructureOfGamma}
\Gamma\Bigl(\frac{1}{8}\Bigr) = \{0, 2\} + 8 \Gamma\Bigl(\frac{1}{8}\Bigr).
\end{equation}

We define a spectral function similar to those defined earlier, but now using the measure $\mu_{\frac38}$ and summation over the set $\Gamma(\frac18)$.
\begin{equation}
c_{\Gamma(\frac{1}{8}), \frac{3}{8}}(t) := \sum_{\gamma\in\Gamma(\frac{1}{8})} \Big| \widehat{\mu}_{\frac{3}{8}}(t+\gamma)\Big|^2.
\end{equation}
We know that $c_{\Gamma(\frac{1}{8}), \frac{3}{8}}(t) \leq 1$ for all $t\in\mathbb{R}$ and that $c_{\Gamma(\frac{1}{8}), \frac{1}{8}}(t) = 1$ for all $t\in\mathbb{R}$ because $\Gamma(\frac{1}{8})$ corresponds to an ONB for $\mu_{\frac{1}{8}}$.  We also know that 
\begin{equation*}
c_{\Gamma(\frac{1}{8}), \frac{3}{8}}(0)  = 1
\end{equation*} 
because $\widehat{\mu}_{\frac{3}{8}}(0+0) = 1$ and $\widehat{\mu}_{\frac{3}{8}}(0+\gamma) = 0$ for $\gamma\in\Gamma(\frac{1}{8}) \backslash \{0\}$.

Using Equation (\ref{Eqn:StructureOfGamma}), we can write
\begin{equation*}
\begin{split}
c_{\Gamma(\frac{1}{8}), \frac{3}{8}}(t) 
& = \sum_{\gamma\in\Gamma(\frac{1}{8})} \Big| \widehat{\mu}_{\frac{3}{8}}(t+\gamma)\Big|^2\\
& = \sum_{\gamma\in\Gamma(\frac{1}{8})} \Big| \widehat{\mu}_{\frac{3}{8}}(t+ 8\gamma)\Big|^2 + \sum_{\gamma\in\Gamma(\frac{1}{8})} \Big| \widehat{\mu}_{\frac{3}{8}}(t+2+8\gamma)\Big|^2
\end{split}
\end{equation*}

We use Equations (\ref{eqn:lambdamuhat}) and (\ref{eqn:muhat}) for $\lambda = \frac38$ to find
\begin{equation}\label{Eqn:KeyIdentity38}
\widehat{\mu}_{\frac{3}{8}}(t) 
= \cos\Bigl(2\pi \Bigl(\frac{3t}{8}\Bigr)\Bigr)\widehat{\mu}_{\frac{3}{8}}\Bigl(\frac{3t}{8}\Bigr).
\end{equation}

This gives a formulation of the function $c_{\Gamma(\frac{1}{8}), \frac{3}{8}}$:
\begin{equation*}
\begin{split}
c_{\Gamma(\frac{1}{8}), \frac{3}{8}}(t) 
& = \sum_{\gamma\in\Gamma(\frac{1}{8})} 
      \cos^2\Bigl(2\pi \Bigl(\frac{3t}{8} + 3\gamma\Bigr)\Bigr)
          \Big| \widehat{\mu}_{\frac{3}{8}}
          \Bigl(\frac{3t}{8}+ 3\gamma\Bigr)\Bigr)\Big|^2  \\
&\phantom{{= + }}    + \sum_{\gamma\in\Gamma(\frac{1}{8})}
      \cos^2\Bigl(2\pi \Bigl(\frac{3t}{8} + \frac{3}{4} + 3\gamma\Bigr)\Bigr)
      \Big| \widehat{\mu}_{\frac{3}{8}}\Bigl(\frac{3t}{8}
      +\frac{3}{4}+3\gamma\Bigr)\Big|^2.\\
\end{split}
\end{equation*}
But $\gamma\in\mathbb{Z}$, so
\begin{equation*}
\begin{split}
c_{\Gamma(\frac{1}{8}), \frac{3}{8}}(t) 
& = \sum_{\gamma\in\Gamma(\frac{1}{8})} 
      \cos^2\Bigl(2\pi \Bigl(\frac{3t}{8}\Bigr)\Bigr)
          \Big| \widehat{\mu}_{\frac{3}{8}}
          \Bigl(\frac{3t}{8}+ 3\gamma\Bigr)\Big|^2  \\
&\phantom{{= + }}    + \sum_{\gamma\in\Gamma(\frac{1}{8})}
      \sin^2\Bigl(2\pi \Bigl(\frac{3t}{8}\Bigr)\Bigr)
      \Big| \widehat{\mu}_{\frac{3}{8}}\Bigl(\frac{3t}{8}
      +\frac{3}{4}+3\gamma\Bigr)\Big|^2\\
 &= 
      \cos^2\Bigl(2\pi \Bigl(\frac{3t}{8}\Bigr)\Bigr)
c_{3\Gamma(\frac{1}{8}), \frac{3}{8}}\Bigl(\frac{3t}{8}\Bigr)  + 
      \sin^2\Bigl(2\pi \Bigl(\frac{3t}{8}\Bigr)\Bigr)
c_{3\Gamma(\frac{1}{8}), \frac{3}{8}}\Bigl(\frac{3t}{8} + \frac{3}{4}\Bigr).
\end{split}
\end{equation*}

As we did in earlier sections, we define the transfer operator $T$ to be
\begin{equation}\label{Defn:TransferT}
(Tf)(t) = \cos^2\Bigl(2\pi \Bigl(\frac{3t}{8}\Bigr)\Bigr)f\Bigl(\frac{3t}{8}\Bigr) + \sin^2\Bigl(2\pi \Bigl(\frac{3t}{8}\Bigr)\Bigr)f\Bigl(\frac{3t}{8} + \frac{3}{4}\Bigr).
\end{equation}

Now, if we define $f_1(t) = c_{3\Gamma(\frac{1}{8}), \frac{3}{8}}(t)$, then
\begin{equation}\label{Eqn:f0Tf1}
f_0(t) = Tf_1(t).
\end{equation}

We can iterate the previous calculation.  We define
\begin{equation*} 
f_k(t) = \sum_{\gamma\in 3^k\Gamma(\frac{1}{8}), \frac{3}{8}} 
       |\widehat{\mu}_{\frac{3}{8}}(t + \gamma)|^2
\end{equation*} 
and then see how $f_{k+1}$ and $f_k$ are related.  As before, write $\Gamma = 8\Gamma + (8\Gamma + 2)$.  Then 

\begin{equation*}
\begin{split}
f_k(t) 
& = \sum_{\gamma\in 3^k\Gamma(\frac{1}{8}), \frac{3}{8}} 
       |\widehat{\mu}_{\frac{3}{8}}(t + \gamma)|^2
= \sum_{\gamma\in \Gamma(\frac{1}{8}), \frac{3}{8}} 
       |\widehat{\mu}_{\frac{3}{8}}(t + 3^k\gamma)|^2\\
& = \sum_{\gamma\in \Gamma(\frac{1}{8}), \frac{3}{8}} 
       |\widehat{\mu}_{\frac{3}{8}}(t + 3^k8\gamma)|^2
    + \sum_{\gamma\in \Gamma(\frac{1}{8}), \frac{3}{8}} 
       |\widehat{\mu}_{\frac{3}{8}}(t + 3^k(8\gamma + 2))|^2.\\
\end{split}
\end{equation*}

Again, apply Equation (\ref{Eqn:KeyIdentity38}) to the equation for $f_k$: 

\begin{equation*}
\begin{split}
f_k(t) 
& = \sum_{\gamma\in \Gamma(\frac{1}{8}), \frac{3}{8}}
    \cos^2\Bigl(
                \frac{2\pi 3t}{8} + \frac{2\pi 3^{k+1}8\gamma}{8} 
          \Bigr)
    \Big|\widehat{\mu}_{\frac{3}{8}}
         \Bigl(
               \frac{3t}{8} + \frac{3^{k+1}8\gamma}{8} 
         \Bigr) \Big|^2\\
& \phantom{{==}} +
\sum_{\gamma\in \Gamma(\frac{1}{8}), \frac{3}{8}}
    \cos^2\Bigl( 
               \frac{2\pi 3t}{8} + \frac{2\pi 3^{k+1}8\gamma}{8} + \frac{2\pi 3^{k+1}2}{8}
          \Bigr) \\
          & \phantom{{==   }}\phantom{{==}} \cdot
    \Big|\widehat{\mu}_{\frac{3}{8}}
         \Bigl(
              \frac{3t}{8} + \frac{3^{k+1}8\gamma}{8} + \frac{3^{k+1}2}{8}
         \Bigr)\Big|^2\\
& = \sum_{\gamma\in \Gamma(\frac{1}{8}), \frac{3}{8}}
    \cos^2\Bigl(
                \frac{2\pi 3t}{8} 
          \Bigr)
    \Big|\widehat{\mu}_{\frac{3}{8}}
         \Bigl(
               \frac{3t}{8} + 3^{k+1}\gamma 
         \Bigr) \Big|^2\\
& \phantom{{==}} +
\sum_{\gamma\in \Gamma(\frac{1}{8}), \frac{3}{8}}
    \cos^2\Bigl( 
               \frac{2\pi 3t}{8} + \frac{\pi 3^{k+1}}{2}
          \Bigr)
    \Big|\widehat{\mu}_{\frac{3}{8}}
         \Bigl(
              \frac{3t}{8} + 3^{k+1}\gamma + \frac{3^{k+1}}{4}
         \Bigr)\Big|^2\\
\end{split}
\end{equation*}
\normalsize

Since $3^{k+1}$ is always odd, the second $\cos^2$ turns into a $\sin^2$ as before.  Therefore
\begin{equation*}
\begin{split}
f_k(t) 
& = \sum_{\gamma\in \Gamma(\frac{1}{8}), \frac{3}{8}}
    \cos^2\Bigl(
                \frac{2\pi 3t}{8} 
          \Bigr)
    \Big|\widehat{\mu}_{\frac{3}{8}}
         \Bigl(
               \frac{3t}{8} + 3^{k+1}\gamma 
         \Bigr) \Big|^2\\
& \phantom{{==}} +
\sum_{\gamma\in \Gamma(\frac{1}{8}), \frac{3}{8}}
    \sin^2\Bigl( 
               \frac{2\pi 3t}{8}
          \Bigr)
    \Big|\widehat{\mu}_{\frac{3}{8}}
         \Bigl(
              \frac{3t}{8} + 3^{k+1}\gamma + \frac{3^{k+1}}{4}
         \Bigr)\Big|^2\\
\end{split}
\end{equation*}
If we define the transfer operator $T_{k}$ to be
\begin{equation}\label{eqn:Tk}
T_{k}f(t)
:= \cos^2\Bigl(\frac{2\pi 3t}{8}\Bigr)f\Bigl(\frac{3t}{8}\Bigr)
+ \sin^2\Bigl(\frac{2\pi 3t}{8}\Bigr)f\Bigl(\frac{3t}{8}+\frac{3^{k+1}}{4}\Bigr),
\end{equation}
then
\begin{equation}
T_{k}f_{k+1}(t) = f_k(t).
\end{equation}
Observe that the transfer operators $T_k$ are dependent on $k$.  Our first example, then, was $T = T_0$, since $Tf_1 = f_0$.

\subsection{Implications for ONBs}

With the use of the family of transfer operators $T_k$ defined in Equation (\ref{eqn:Tk}), we explore possible sets of Fourier frequencies for
$\mu_{\frac{3}{8}}$.  First, note that $T$ maps the constant function $1$ to $1$:
\begin{equation*}
(T1)(t) = \cos^2\Bigl(2\pi \Bigl(\frac{3t}{8}\Bigr)\Bigr)\cdot 1 + \sin^2\Bigl(2\pi \Bigl(\frac{3t}{8}\Bigr)\Bigr)\cdot 1 = 1.
\end{equation*}

Suppose on the other hand that $Tf \equiv 1$ implied $f \equiv 1$ as well.  In this case, if we knew that $f_0 \equiv 1$, we would also have $f_1 \equiv 1$.  In other words, we  would  have two different ONBs for $L^2(\mu_{\frac{3}{8}})$.   In the next proposition, we show that this does hold, and in fact, can be extended for all $k$.  

\begin{proposition}\label{Prop:ChainofONBs}
If for some positive $k_0$, the set $E(3^{k_0}\Gamma)$ is an orthonormal basis for $L^2(\mu_{\frac38})$, then for every $k \geq 0$, the set $E( 3^k \Gamma)$ is an ONB.  \end{proposition}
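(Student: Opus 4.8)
The plan is to treat the whole chain of spectral functions $f_k(t) = c_{3^k\Gamma,\,\frac38}(t) = \sum_{\gamma\in 3^k\Gamma}|\widehat{\mu}_{\frac38}(t+\gamma)|^2$ simultaneously, using the two structural facts already assembled: the transfer relation $T_k f_{k+1} = f_k$ and the identity $T_k 1 = 1$. Before invoking these I would record the standing properties shared by every $f_k$. Each set $3^k\Gamma$ is orthogonal in $L^2(\mu_{\frac38})$ by exactly the argument of Lemma \ref{Lemma:3Gamma18ON}: a difference of two elements of $3^k\Gamma$ equals $3^k$ times a difference in $\Gamma$, which lies in $\mathcal{Z}_{\frac18}$, and multiplying by the odd integer $3^k$ keeps it in $\mathcal{Z}_{\frac38}$. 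Hence $f_k(t)\le 1$ for all $t$ by Lemma \ref{lem:cproperties}; orthogonality supplies the upper frame bound $1$, so by Lemma \ref{Lemma:BoundedImpliesUpperFrame} the synthesis operator for $E(3^k\Gamma)$ is bounded, and therefore $f_k$ is continuous (indeed entire after extension) by Theorem \ref{Thm:cGammaAnalytic}; moreover $0\le f_k$ and $f_k(0)=1$. Since by Lemma \ref{lem:cproperties} the statement ``$E(3^k\Gamma)$ is an ONB'' is equivalent to ``$f_k\equiv 1$,'' the proposition reduces to two implications: \emph{(downward)} $f_{k+1}\equiv 1 \Rightarrow f_k\equiv 1$, and \emph{(upward)} $f_k\equiv 1 \Rightarrow f_{k+1}\equiv 1$.

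The downward implication is immediate, and it is the step that propagates the hypothesis to small $k$: if $f_{k+1}\equiv 1$ then $f_k = T_k f_{k+1} = T_k 1 = 1$, because $\cos^2 + \sin^2 = 1$ forces $T_k$ to fix the constant function $1$. Iterating from $k_0$ down to $0$ gives $f_{k_0}\equiv 1 \Rightarrow f_{k_0-1}\equiv 1 \Rightarrow \cdots \Rightarrow f_0\equiv 1$.

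The upward implication is the crux, and it amounts to proving that $T_k$ is injective on the relevant class of functions: that $T_k g \equiv 1$ together with $0\le g\le 1$ and $g$ continuous forces $g\equiv 1$. Here I would exploit that $T_k g$ is a pointwise convex combination. Writing $\theta(t) = \frac{3\pi t}{4}$, for every $t$ we have
\[ \cos^2(\theta(t))\,g\Bigl(\tfrac{3t}{8}\Bigr) + \sin^2(\theta(t))\,g\Bigl(\tfrac{3t}{8}+\tfrac{3^{k+1}}{4}\Bigr) = 1, \]
and since both values of $g$ are $\le 1$ while the weights are nonnegative and sum to $1$, equality with $1$ forces $\cos^2(\theta(t))\bigl(1-g(\tfrac{3t}{8})\bigr)=0$ and $\sin^2(\theta(t))\bigl(1-g(\tfrac{3t}{8}+\tfrac{3^{k+1}}{4})\bigr)=0$. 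In particular $g(\tfrac{3t}{8})=1$ at every $t$ with $\cos(\theta(t))\ne 0$, i.e. off the discrete set $\{\tfrac{2(2\ell+1)}{3}:\ell\in\mathbb{Z}\}$. As $t\mapsto \tfrac{3t}{8}$ is a bijection of $\mathbb{R}$ carrying a discrete set to a discrete set, this says $g\equiv 1$ on the complement of a discrete set, and continuity of $g$ (secured above) upgrades this to $g\equiv 1$ on all of $\mathbb{R}$. Applying this with $g=f_{k+1}$, for which $T_k g = f_k \equiv 1$, yields $f_{k+1}\equiv 1$.

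Combining the two implications finishes the proof: from $f_{k_0}\equiv 1$ the downward step yields $f_0\equiv 1$, and the upward step then propagates $f_k\equiv 1$ to every $k\ge 0$, so each $E(3^k\Gamma)$ is an ONB. I expect the upward implication to be the only genuinely nontrivial point; the work there lies entirely in reading $T_k g \equiv 1$ as equality in a convex combination of values bounded by $1$ and then passing from a co-discrete set to all of $\mathbb{R}$ by continuity. The mild prerequisites — uniform orthogonality of every $3^k\Gamma$ and continuity of each $f_k$ — are exactly what the earlier lemmas provide.
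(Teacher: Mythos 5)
Your proof is correct, and its global structure matches the paper's: both reduce the claim to the two implications $f_{k+1}\equiv 1 \Rightarrow f_k\equiv 1$ (immediate from $T_k 1=1$) and $f_k\equiv 1 \Rightarrow f_{k+1}\equiv 1$, and both propagate these up and down the chain from $k_0$. The difference lies in how the crucial upward step is executed. The paper views $T_kf_{k+1}(t)$ as an inner product of two vectors in $\mathbb{R}^2$ and applies Cauchy--Schwarz to derive the identity $\cos^2\bigl(\tfrac{2\pi 3t}{8}\bigr)f_{k+1}^2\bigl(\tfrac{3t}{8}\bigr)+\sin^2\bigl(\tfrac{2\pi 3t}{8}\bigr)f_{k+1}^2\bigl(\tfrac{3t}{8}+\tfrac{3^{k+1}}{4}\bigr)=1$ involving the \emph{square} of the spectral function, and then reaches a contradiction by spreading a hypothetical strict inequality $f_{k+1}^2<1$ to an interval and evaluating at a point where the cosine is nonzero. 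You bypass Cauchy--Schwarz entirely: the transfer equation $T_kf_{k+1}\equiv 1$ is already a pointwise convex combination of values of $f_{k+1}$ itself, so $f_{k+1}\le 1$ forces $\cos^2(\theta(t))\bigl(1-f_{k+1}(\tfrac{3t}{8})\bigr)=0$ and its sine counterpart, giving $f_{k+1}=1$ off a discrete set and then everywhere by continuity. Your route is more elementary and arguably cleaner, since it never passes through $f_{k+1}^2$; both arguments consume the same inputs (the bound $f_{k+1}\le 1$ coming from orthogonality of $E(3^{k+1}\Gamma)$ in $L^2(\mu_{\frac38})$, and continuity). You are also more careful than the paper on the prerequisites: the paper uses continuity of $f_{k+1}$ and the bound $f_{k+1}\le 1$ without comment, whereas you derive them from orthogonality of $3^k\Gamma$ with respect to $\mu_{\frac38}$, Lemma \ref{Lemma:BoundedImpliesUpperFrame}, and Theorem \ref{Thm:cGammaAnalytic}.
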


\begin{proof}
As above, we have shown that $T_{k-1}f_{k} = f_{k-1}$ for all $k >0$.  If $f_k \equiv 1$, then it is clear from the definition of each $T_k$ that $f_{k-1} \equiv 1$ as well.  

Using the relationship $T_k f_{k+1} =f_k$,  suppose that $f_k \equiv 1$.  When $T_kf_{k+1}(t)$ is considered as the inner product of the 2-dimensional real vectors 

\begin{equation*} \left[\begin{matrix} \cos \Bigr(\frac{2\pi3t}{8} \Bigr) \\ \sin  \Bigr(\frac{2\pi3t}{8} \Bigr) \end{matrix}\right] \qquad \textrm{and} \qquad \left[\begin{matrix} \cos  \Bigr(\frac{2\pi3t}{8} \Bigr) f_{k+1}  \Bigr(\frac{3t}{8} \Bigr) \\ \sin  \Bigr(\frac{2\pi3t}{8} \Bigr) f_{k+1} \Bigr(\frac{3t}{8} + \frac{3^{k+1}}{4}\Bigr) \end{matrix}\right], \end{equation*}
the Cauchy-Schwarz inequality yields for every $t$

\begin{eqnarray*}  1 &=& T_kf_{k+1}(t) \\&\leq&  1 \cdot  \Bigr[ \cos^2\Big(\frac{2\pi 3t}{8}\Big)f_{k+1}^2\Big(\frac{3t}{8}\Big) + \sin^2\Big(\frac{2\pi3t}{8}\Big) f_{k+1}^2\Big(\frac{3t}{8} + \frac{3^{k+1}}{4}\Big) \Bigr]   \leq 1.\end{eqnarray*}

Thus, we have for all $t$, 
\begin{equation}\label{eqn:fsquared}  \cos^2\Big(\frac{2\pi 3t}{8}\Big)f_{k+1}^2\Big(\frac{3t}{8}\Big) + \sin^2\Big(\frac{2\pi3t}{8}\Big) f_{k+1}^2\Big(\frac{3t}{8} + \frac{3^{k+1}}{4}\Big) = 1 \end{equation}

Suppose for some value $\frac{3t}{8}$ we have $f_{k+1}^2(\frac{3t}{8}) < 1$.  Without loss of generality, we can also assume that $\cos(\frac{2\pi 3t}{8}) \neq 0$ since $f_{k+1}$ is a continuous function.  If $ f_{k+1}^2$ is strictly less than $1$ at some point, it is strictly less than one on an interval.  Therefore, we can choose a point from this interval where the cosine function is nonzero. 

At this point $t$, we have $\cos^2(\frac{2\pi 3t}{8})f_{k+1}^2(\frac{3t}{8}) < \cos^2(\frac{2\pi 3t}{8})$.  Since we also know that $f_{k+1}$ is bounded above by $1$ for all $t$, this results in a contradiction of Equation (\ref{eqn:fsquared}).  Therefore, we must have that $f_{k+1}(t) = 1$ for all $t$.  This proves that, given one spectral set $3^k\Gamma$ yielding an orthonormal basis of exponential functions, we must have all sets $3^k\Gamma$ yielding ONBs.
\end{proof}

\section{Acknowledgements}
The authors would like to thank Dorin Dutkay, Judith Packer, Noel Brady, and Erin Pearse for helpful 
conversations during the writing of this paper.  The clever proof for $p=2n-1$ in Remark \ref{rmk:examples}  was shown to us by 
Patrick Orchard, an undergraduate student at the University of Oklahoma.  
We are also grateful to the anonymous referees, whose thoughtful comments made this a better paper.

%%%%%%%%%%%%%%%%%%%%%%%%%%%%%%%%%%%%%

\bibliographystyle{amsalpha}
\bibliography{bernoulli}

\providecommand{\bysame}{\leavevmode\hbox to3em{\hrulefill}\thinspace}
\providecommand{\MR}{\relax\ifhmode\unskip\space\fi MR }
% \MRhref is called by the amsart/book/proc definition of \MR.
\providecommand{\MRhref}[2]{%
  \href{http://www.ams.org/mathscinet-getitem?mr=#1}{#2}
}
\providecommand{\href}[2]{#2}
\begin{thebibliography}{HKLW07}

\bibitem[Den09]{Den09}
Qi-Rong Deng, \emph{Reverse iterated function system and dimension of discrete
  fractals}, Bull. Aust. Math. Soc. \textbf{79} (2009), no.~1, 37--47.
  \MR{MR2486879}

\bibitem[DHJ09]{DHJ09}
Dorin~Ervin Dutkay, Deguang Han, and Palle E.~T. Jorgensen, \emph{Orthogonal
  exponentials, translations, and {B}ohr completions}, ar{X}iv:0901.4115, 2009.

\bibitem[DHPS08]{DHPS08}
Dorin~Ervin Dutkay, Deguang Han, Gabriel Picioroaga, and Qiyu Sun,
  \emph{Orthonormal dilations of {P}arseval wavelets}, Math. Ann. \textbf{341}
  (2008), no.~3, 483--515. \MR{MR2399155 (2009c:42080)}

\bibitem[DHS09]{DHS09}
Dorin~Ervin Dutkay, Deguang Han, and Qiyu Sun, \emph{On the spectra of a
  {C}antor measure}, Adv. Math. \textbf{221} (2009), no.~1, 251--276.
  \MR{MR2509326}

\bibitem[DHSW09]{DHSW09}
Dorin Dutkay, Deguang Han, Qiyu Sun, and Eric Weber, \emph{Hearing the
  {H}ausdorff dimension}, ar{X}iv:0910.5433, 2009.

\bibitem[DJ05]{DuJo05}
Dorin~Ervin Dutkay and Palle E.~T. Jorgensen, \emph{Hilbert spaces of
  martingales supporting certain substitution-dynamical systems}, Conform.
  Geom. Dyn. \textbf{9} (2005), 24--45 (electronic). \MR{MR2133804
  (2006a:37005)}

\bibitem[DJ06]{DuJo06}
\bysame, \emph{Iterated function systems, {R}uelle operators, and invariant
  projective measures}, Math. Comp. \textbf{75} (2006), no.~256, 1931--1970
  (electronic). \MR{MR2240643 (2008h:28005)}

\bibitem[DJ07a]{DuJo07a}
\bysame, \emph{Analysis of orthogonality and of orbits in affine iterated
  function systems}, Math. Z. \textbf{256} (2007), no.~4, 801--823.
  \MR{MR2308892}

\bibitem[DJ07b]{DuJo07b}
\bysame, \emph{Fourier frequencies in affine iterated function systems}, J.
  Funct. Anal. \textbf{247} (2007), no.~1, 110--137. \MR{MR2319756
  (2008f:42007)}

\bibitem[DJ07c]{DuJo07c}
\bysame, \emph{Harmonic analysis and dynamics for affine iterated function
  systems}, Houston J. Math. \textbf{33} (2007), no.~3, 877--905.
  \MR{MR2335741}

\bibitem[DJ09a]{DJ09}
\bysame, \emph{{F}ourier duality for fractal measures with affine scales},
  ar{X}iv:0911.1070, 2009.

\bibitem[DJ09b]{DuJo09}
\bysame, \emph{Quasiperiodic spectra and orthogonality for iterated function
  system measures}, Math. Z. \textbf{261} (2009), no.~2, 373--397.
  \MR{MR2457304}

\bibitem[Erd39]{Erd39}
Paul Erd{\"o}s, \emph{On a family of symmetric {B}ernoulli convolutions}, Amer.
  J. Math. \textbf{61} (1939), 974--976. \MR{MR0000311 (1,52a)}

\bibitem[Erd40]{Erd40}
\bysame, \emph{On the smoothness properties of a family of {B}ernoulli
  convolutions}, Amer. J. Math. \textbf{62} (1940), 180--186. \MR{MR0000858
  (1,139e)}

\bibitem[FJKO05]{FJKO05}
Matthew Fickus, Brody~D. Johnson, Keri Kornelson, and Kasso~A. Okoudjou,
  \emph{Convolutional frames and the frame potential}, Appl. Comput. Harmon.
  Anal. \textbf{19} (2005), no.~1, 77--91. \MR{MR2147063 (2006d:42050)}

\bibitem[Fug74]{Fug74}
Bent Fuglede, \emph{Commuting self-adjoint partial differential operators and a
  group theoretic problem}, J. Functional Analysis \textbf{16} (1974),
  101--121. \MR{MR0470754 (57 \#10500)}

\bibitem[HKLW07]{HKLW07}
Deguang Han, Keri Kornelson, David Larson, and Eric Weber, \emph{Frames for
  undergraduates}, Student Mathematical Library, vol.~40, American Mathematical
  Society, Providence, RI, 2007. \MR{MR2367342}

\bibitem[HL02]{HuLa02}
Tian-You Hu and Ka-Sing Lau, \emph{Fourier asymptotics of {C}antor type
  measures at infinity}, Proc. Amer. Math. Soc. \textbf{130} (2002), no.~9,
  2711--2717 (electronic). \MR{MR1900879 (2003d:42011)}

\bibitem[HL08]{HuLa08}
\bysame, \emph{Spectral property of the {B}ernoulli convolutions}, Adv. Math.
  \textbf{219} (2008), no.~2, 554--567. \MR{MR2435649}

\bibitem[Hut81]{Hut81}
John~E. Hutchinson, \emph{Fractals and self-similarity}, Indiana Univ. Math. J.
  \textbf{30} (1981), no.~5, 713--747. \MR{MR625600 (82h:49026)}

\bibitem[JKS07]{JKS07a}
Palle E.~T. Jorgensen, Keri~A. Kornelson, and Karen~L. Shuman, \emph{Affine
  systems: asymptotics at infinity for fractal measures}, Acta Appl. Math.
  \textbf{98} (2007), no.~3, 181--222. \MR{MR2338387 (2008i:42013)}

\bibitem[JKS08]{JKS08}
Palle E.~T. Jorgensen, Keri Kornelson, and Karen Shuman, \emph{Orthogonal
  exponentials for {B}ernoulli iterated function systems}, Representations,
  Wavelets, and Frames: A celebration of the mathematical work of Lawrence W.
  Baggett, Appl. Num. Harm. Anal., Birkh\"{a}user, Boston, MA, 2008,
  pp.~217--237.

\bibitem[JO08]{JoOk08}
Brody~D. Johnson and Kasso~A. Okoudjou, \emph{Frame potential and finite
  abelian groups}, Radon transforms, geometry, and wavelets, Contemp. Math.,
  vol. 464, Amer. Math. Soc., Providence, RI, 2008, pp.~137--148. \MR{MR2440134
  (2009k:42074)}

\bibitem[Jor06]{Jor06}
Palle E.~T. Jorgensen, \emph{Analysis and probability: wavelets, signals,
  fractals}, Graduate Texts in Mathematics, vol. 234, Springer, New York, 2006.
  \MR{MR2254502 (2008a:42030)}

\bibitem[Jor08]{Jor08}
\bysame, \emph{Frame analysis and approximation in reproducing kernel {H}ilbert
  spaces}, Frames and operator theory in analysis and signal processing,
  Contemp. Math., vol. 451, Amer. Math. Soc., Providence, RI, 2008,
  pp.~151--169. \MR{MR2422246 (2009f:42033)}

\bibitem[JP98]{JoPe98}
Palle E.~T. Jorgensen and Steen Pedersen, \emph{Dense analytic subspaces in
  fractal {$L\sp 2$}-spaces}, J. Anal. Math. \textbf{75} (1998), 185--228.
  \MR{MR1655831 (2000a:46045)}

\bibitem[JP99]{JoPe99}
\bysame, \emph{Spectral pairs in {C}artesian coordinates}, J. Fourier Anal.
  Appl. \textbf{5} (1999), no.~4, 285--302. \MR{MR1700084 (2002d:42027)}

\bibitem[JS07]{JoSo07}
Palle E.~T. Jorgensen and Myung-Sin Song, \emph{Entropy encoding, {H}ilbert
  space, and {K}arhunen-{L}o\`eve transforms}, J. Math. Phys. \textbf{48}
  (2007), no.~10, 103503, 22. \MR{MR2362796 (2008k:94016)}

\bibitem[{\L}W02]{LaWa02}
Izabella {\L}aba and Yang Wang, \emph{On spectral {C}antor measures}, J. Funct.
  Anal. \textbf{193} (2002), no.~2, 409--420. \MR{MR1929508 (2003g:28017)}

\bibitem[{\L}W06]{LaWa06}
\bysame, \emph{Some properties of spectral measures}, Appl. Comput. Harmon.
  Anal. \textbf{20} (2006), no.~1, 149--157. \MR{MR2200934 (2007e:28001)}

\bibitem[MS98]{MaSi98}
R.~Daniel Mauldin and K{\'a}roly Simon, \emph{The equivalence of some
  {B}ernoulli convolutions to {L}ebesgue measure}, Proc. Amer. Math. Soc.
  \textbf{126} (1998), no.~9, 2733--2736. \MR{MR1458276 (98i:26009)}

\bibitem[OCS02]{OrSe02}
Joaquim Ortega-Cerd{\`a} and Kristian Seip, \emph{Fourier frames}, Ann. of
  Math. (2) \textbf{155} (2002), no.~3, 789--806. \MR{MR1923965 (2003k:42055)}

\bibitem[Ped04a]{Ped04a}
Steen Pedersen, \emph{The dual spectral set conjecture}, Proc. Amer. Math. Soc.
  \textbf{132} (2004), no.~7, 2095--2101 (electronic). \MR{MR2053982
  (2005e:42136)}

\bibitem[Ped04b]{Ped04b}
\bysame, \emph{On the dual spectral set conjecture}, Current trends in operator
  theory and its applications, Oper. Theory Adv. Appl., vol. 149, Birkh\"auser,
  Basel, 2004, pp.~487--491. \MR{MR2063764 (2005h:42016)}

\bibitem[PS96]{PeSo96}
Yuval Peres and Boris Solomyak, \emph{Absolute continuity of {B}ernoulli
  convolutions, a simple proof}, Math. Res. Lett. \textbf{3} (1996), no.~2,
  231--239. \MR{MR1386842 (97f:28006)}

\bibitem[PSS00]{PSS00}
Yuval Peres, Wilhelm Schlag, and Boris Solomyak, \emph{Sixty years of
  {B}ernoulli convolutions}, Fractal geometry and stochastics, {II}
  ({G}reifswald/{K}oserow, 1998), Progr. Probab., vol.~46, Birkh\"auser, Basel,
  2000, pp.~39--65. \MR{MR1785620 (2001m:42020)}

\bibitem[PW01]{PeWa01}
Steen Pedersen and Yang Wang, \emph{Universal spectra, universal tiling sets
  and the spectral set conjecture}, Math. Scand. \textbf{88} (2001), no.~2,
  246--256. \MR{MR1839575 (2002k:52030)}

\bibitem[Rud87]{Rud87}
Walter Rudin, \emph{Real and complex analysis}, third ed., McGraw-Hill Book
  Co., New York, 1987. \MR{MR924157 (88k:00002)}

\bibitem[Sid03]{Sid03}
N.~Sidorov, \emph{Ergodic-theoretic properties of certain {B}ernoulli
  convolutions}, Acta Math. Hungar. \textbf{101} (2003), no.~4, 345--355.
  \MR{MR2017940 (2004h:37007)}

\bibitem[Str98]{Str98}
Robert~S. Strichartz, \emph{Remarks on: ``{D}ense analytic subspaces in fractal
  {$L\sp 2$}-spaces'' [{J}.\ {A}nal.\ {M}ath.\ {\bf 75} (1998), 185--228;
  {MR}1655831 (2000a:46045)] by {P}. {E}. {T}. {J}orgensen and {S}.
  {P}edersen}, J. Anal. Math. \textbf{75} (1998), 229--231. \MR{MR1655832
  (2000a:46046)}

\end{thebibliography}

\end{document}